\documentclass[a4paper]{article} 
\pdfoutput=1
\usepackage{multicol}
\usepackage{color}
\usepackage{amssymb,amsmath,verbatim,amsthm}
\usepackage[final]{graphicx}
\usepackage[active]{srcltx}
\usepackage{hyperref}

\newcommand{\R}{\mathbb{R}}
\newcommand{\Prob}{\mathbb{P}}
\renewcommand{\Pr}{\Prob}

\newcommand{\E}{\mathbb{E}}
\newcommand{\Ep}[1]{\E\left[ #1 \right]}

\newcommand{\Mc}{\mathcal{M}}

\newcommand{\eps}{\varepsilon}

\newcommand{\ps}{\pi^*}
\newcommand{\tm}{\tilde{m}}
\newcommand{\gp}{{\gamma_+}}
\newcommand{\gm}{{\gamma_-}}
\newcommand{\lmu}{\ell_{\mu}}
\newcommand{\rmu}{r_{\mu}}

\newtheorem{theorem}{Theorem}[section]
\newtheorem{prop}[theorem]{Proposition}
\theoremstyle{remark}
\newtheorem{remark}[theorem]{Remark}

\newcommand{\indic}[1]{\boldsymbol{1}_{\{\ensuremath{#1}\}}}

\newcommand{\as}{a.s.}

\newcommand{\eg}{e.g.}

\newcommand{\dx}{\mathrm{d}x}

\newcommand{\re}{\mathbb{R}} %
\newcommand{\td}{\mathrm{d}} %
\newcommand{\ub}{\ensuremath{\overline{b}}} %
\newcommand{\lb}{\ensuremath{\underline{b}}} %
\newcommand{\uB}{\ensuremath{\overline{B}}} %
\newcommand{\lB}{\ensuremath{\underline{B}}} %
\newcommand{\dbmps}{\indic{\overline{B}_\tau\geq \ub,\,
    \underline{B}_\tau> \lb}} %
\newcommand{\sS}{\overline{S}} %
\newcommand{\iS}{\underline{S}} %
\newcommand{\uG}{\overline{G}} %
\newcommand{\lG}{\underline{G}} %
\newcommand{\sM}{\overline{M}} %
\newcommand{\iM}{\underline{M}}%

\newcommand{\dbMps}{\indic{\overline{M}_\infty\geq \ub,\,
    \underline{M}_\infty> \lb}} %
\newcommand{\tntMevent}{\sM_{\infty} \ge \ub, \iM_{\infty} > \lb}
\newcommand{\tntM}{\indic{\tntMevent}}
\newcommand{\tntwTevent}{\overline{\omega}_{T} \ge \ub, \underline{\omega}_{T} > \lb}
\newcommand{\tntwT}{\indic{\tntwTevent}}
\newcommand{\rtntwTevent}{\overline{\omega}_T < \ub\textrm{ or
    }\underline{\omega}_T\leq \lb}
\newcommand{\rtntwT}{\indic{\rtntwTevent}}

%% Seems top produce reasonable layouts for both A4 paper and letter size.
\setlength{\topskip}{0cm} %
\setlength{\topmargin}{-1in} % Latex measures from 1in above top left
%\addtolength{\topmargin}{5ex} %
\addtolength{\topmargin}{2cm} %
\setlength{\headsep}{0.5cm} %
\setlength{\headheight}{0.5cm} %
\setlength{\textheight}{24.5cm} %
\setlength{\textwidth}{16cm} %
\setlength{\oddsidemargin}{-1in} % Latex measures from 1in above top left
%\addtolength{\oddsidemargin}{2.5cm} % Centered with a4paper
\addtolength{\oddsidemargin}{2.795cm} % Centered with letter
\setlength{\evensidemargin}{2.5cm} %
\setlength{\footskip}{1.0cm} %
\setlength{\parindent}{0cm}%
\setlength{\parskip}{0.4cm}%

\usepackage{asymptote}
\begin{asydef}
usepackage("amsmath");
texpreamble("\newcommand{\ub}{\ensuremath{\overline{b}}}\newcommand{\lb}{\ensuremath{\underline{b}}}");
\end{asydef}

\title{On joint distributions of the maximum, minimum and terminal value of a continuous uniformly integrable martingale
}

\author{Alexander~M.~G.~Cox\thanks{e-mail:
    \href{mailto:A.M.G.Cox@bath.ac.uk}{\nolinkurl{A.M.G.Cox@bath.ac.uk}};
    web: \url{www.maths.bath.ac.uk/\~mapamgc/}}\\
  Dept.\ of Mathematical Sciences\\
  University of Bath\\
  Bath BA2 7AY, UK \and Jan Ob\l \'oj\thanks{e-mail:
    \href{mailto:jan.obloj@maths.ox.ac.uk}{\nolinkurl{jan.obloj@maths.ox.ac.uk}}
    ; web: \url{www.maths.ox.ac.uk/\~obloj/}. 
    The research has received funding from the European Research Council under the European Union's Seventh Framework Programme (FP7/2007-2013) / ERC grant agreement no. 335421. The author is also grateful to the Oxford-Man Institute of Quantitative Finance and St John's College in Oxford for their support.}\\
  Mathematical Institute\\
  University of Oxford\\
  Oxford, OX2 6GG }

\date{\today}

\begin{document}
% \author{Alexander Cox \& Jan Ob\l\'oj}
\maketitle
\begin{abstract}
  We study the joint laws of the maximum and minimum of a continuous, uniformly
  integrable martingale. In particular, we give explicit martingale inequalities
  which provide upper and lower bounds on the joint exit probabilities of a
  martingale, given its terminal law. Moreover, by constructing explicit and
  novel solutions to the Skorokhod embedding problem, we show that these bounds
  are tight. Together with previous results of Az\'ema \& Yor, Perkins, Jacka
  and Cox \& Ob\l\'oj, this allows us to completely characterise the upper and
  lower bounds on all possible exit/no-exit probabilities, subject to a given
  terminal law of the martingale. In addition, we determine some further
  properties of these bounds, considered as functions of the maximum and
  minimum.
\end{abstract}

\section{Introduction}

The study of the running maximum and minimum of a martingale has a prominent place in probability theory,  starting with Doob's maximal and $L^p$ inequalities. In seminal contributions, Blackwell and Dubins \cite{BD63}, Dubins and Gilat \cite{DubinsGilat:78} and Az\'ema and Yor \cite{AzemaYor:79,AzemaYor:79b} established that the distribution of the maximum $\sM_\infty:=\sup_{t\leq \infty} M_t$ of a uniformly integrable martingale $M$ is bounded from above, in stochastic order, by the so called Hardy-Littlewood transform of the distribution of $M_\infty$, and the bound is attained. This led to series of studies on the possible distributions of $(M_\infty,\sM_\infty)$ including Gilat and Meilijson \cite{GM88}, Kertz and R\"osler \cite{KR90, KertzRosler:92b, KertzRosler:93}, Rogers \cite{Rogers:93}, Vallois \cite{Vallois:93}, see also Carraro, El Karoui and Ob\l\'oj \cite{CarraroElKarouiObloj:09}. 

More recently, these problems have gained a new momentum from applications in the field of mathematical finance. The bounds on the distribution of the maximum, given the distribution of the terminal value, are interpreted as bounds on prices of barrier options given the prices of (vanilla) European options. Further, the bounds are often obtained by devising pathwise inequalities which then have the interpretation of (super) hedging strategies. This approach is referred to as robust pricing and hedging and goes back to Hobson \cite{Hobson:98b}, see also Ob\l\'oj \cite{Obloj:EQF} and Hobson \cite{Hobson:10} for survey papers. More recently, for example in Acciaio et.~al.~\cite{Acciaio:2013ab}, martingale inequalities have been used to study some classical probabilistic inequalities, and are of interest in their own right.

Here we propose to study the distribution of $(\sM_\infty,\iM_\infty)$, where $\iM_{\infty}:= \inf_{t \le \infty} M_t$ is the infimum of the process, given the distribution of $M_\infty$, for a uniformly integrable continuous martingale $M$. More precisely, we present sharp lower and upper bounds on all double exit/no-exit probabilities for $M$ in terms of the distribution of $M_\infty$, i.e.\ the probabilities that $\sM_\infty$ is \emph{greater/smaller} than $\ub$ \emph{and/or} that $\iM_\infty$ is \emph{greater/smaller} than $\lb$, for some barriers $\lb < \ub$. This amounts to considering eight different events. They of course come in pairs, \eg{} $\{\tntMevent\}$ is the complement of $\{\sM_\infty<\ub \textrm{ or }\iM_\infty\leq \lb\}$ and, by symmetry, it suffices to consider only one of $\{\tntMevent\}$ and $\{\sM_\infty<\ub, \iM_{\infty}\leq \lb\}$. It follows that to provide a complete description it suffices to consider the three events
\begin{equation}
\label{eq:eventsofinterest}
\{\sM_\infty\geq \ub, \iM_\infty\leq \lb\},\quad \{\sM_\infty< \ub, \iM_\infty>\lb\}\ \textrm{ and }\ \{\tntMevent\}.
\end{equation}
By continuity and time-change arguments, it follows that for a fixed distribution $\mu$ of $M_\infty$, our problem is equivalent to studying these events for $M_t=B_{t\wedge \tau}$ where $\tau$ varies among all stopping times such that $M$ is uniformly integrable and $M_\infty=B_\tau$ has distribution $\mu$, i.e.\ solutions to the Skorokhod embedding problem for $\mu$ in $B$, see Ob\l\'oj \cite{Obloj:04b}. Sharp bounds on the probability of the first event in \eqref{eq:eventsofinterest} follow from Perkins and tilted-Jacka solutions, see Section \ref{ap:mart} below. The case of the second event was treated in Cox and Ob\l\'oj \cite{Cox:2011aa} and is also recalled in Section \ref{ap:mart}.

Our contribution here is twofold. First, we derive lower and upper bounds on $\Pr(\tntMevent)$ in terms of the distribution of $M_\infty$ and give explicit constructions of martingales which attain the bounds. We do this by devising pathwise inequalities which give upper and lower bounds and then by constructing two new solutions to the Skorokhod embedding problem for which equalities are attained in our pathwise inequalities. Second, we study universal qualitative properties of the probabilities of the events in \eqref{eq:eventsofinterest} seen as surfaces in the parameters $\lb,\ub$. While the techniques used to derive the bounds on $\Pr(\tntMevent)$ are not new, the explicit constructions we need to use are novel, and our goal in the first part of the paper is to provide those bounds which are currently not known; in this sense, we complete previous work in the literature. The contribution in the second part of the paper is, to the best of our knowledge, the first attempt to address questions of this nature.

\subsection{Motivation}
We believe that there are two natural motivations for our results. First, we believe we solve an intrinsically interesting probabilistic question and second, our results correspond to robust pricing and hedging of certain double barrier options in finance. We elaborate now on both.

From the probabilistic point of view, we follow in the footsteps of seminal works mentioned above. The results therein were typically stated for a martingale and its maximum but naturally can be reformulated for a martingale and its minimum $\iM_\infty$. They grant us a full understanding of possible joint distributions of couples $(M_\infty,\sM_\infty)$ or $(M_\infty,\iM_\infty)$. In contrast, much less is known about the joint distribution of $(M_\infty,\sM_\infty,\iM_\infty)$ and it proves much harder to study (although promising recent progress has been made in this direction in a discrete time setting, when one considers the joint law of a random walk, its maximum, minimum and {\it signature} by \cite{Duembgen:2014aa}). Indeed, already in the case of Brownian motion $B$, while the distribution of $(B_t,\overline{B}_t)$ is readily accessible with a simply and explicit density, the distribution of the triplet $(B_t,\overline{B}_t,\underline{B}_t)$ is described through an infinite series. Likewise, $\Pr(\sM_\infty\geq \ub)$ is maximised among all martingales $M$ with a fixed distribution of $M_\infty$, by one extremal martingale simultaneously for all $\ub$. In contrast, as we will show here, maximising $\Pr(\tntMevent)$ will require martingales with qualitatively different behaviour for different values of $(\lb,\ub)$.

In terms of mathematical finance, the constructions presented here correspond to robust pricing (and hedging) of double touch/no-touch barrier options --- for a detailed discussion of applications we refer to our earlier papers \cite{Cox:2011ab, Cox:2011aa} where we studied the first two events in \eqref{eq:eventsofinterest}.  Such an option would pay out $1$ if and only if one barrier is attained \emph{and} a second given barrier is not attained, i.e.\ we consider the payoff of the form $\{\sS_T\geq \ub,\iS_T>\lb\}$, where $(S_t:t\leq T)$ is a uniformly integrable martingale representing the stock price process. The double touch/no-touch options are partially a theoretical construct --- (to the best of our knowledge) they are not commonly traded even in Foreign Exchange (FX) markets, where barriers options are most popular. However, they prove useful as they can be represented as a sum or difference of other barrier options. We can then interpret our results as super-/sub-hedges for sums and differences of barrier options. More precisely, we can write
\begin{eqnarray}
\indic{\sS_T\geq \ub,\iS_T>\lb} &=& \indic{\sS_T\geq \ub}-\indic{\sS_T\geq \ub,\iS_T\leq \lb} \label{eq:dtnt_decompose1}\\
  &=& 1-\Big(\indic{\iS_T\leq \lb}+ \indic{\sS_T <
    \ub,\iS_T > \lb}\Big)
  \label{eq:dtnt_decompose2}.
\end{eqnarray}
The first decomposition \eqref{eq:dtnt_decompose1} writes the payoff of a double touch/no-touch option as a difference of a one-touch option (with payoff $\indic{\sS_T\geq \ub}$) and a double touch option. The second decomposition \eqref{eq:dtnt_decompose2} writes the payoff of a double touch/no-touch option as one minus the portfolio of a one-touch option and a double no-touch (range) option with payoff $\indic{\sS_T< \ub,\iS_T> \lb}$. This is of particular interest as both one-touch and range options are liquidly traded in main currency pairs in FX markets. Effectively, using the no-arbitrage prices derived in Theorems \ref{thm:upper_price_mixed} and \ref{thm:lower_price_mixed} below, we obtain a way of checking for absence of arbitrage in the observed prices of European calls/puts, one-touch and range options. Furthermore, if one-touch options are liquidly traded, we can then exploit pathwise inequalities derived in this paper as super- or sub-hedging strategies for range options or double touch options. For certain barriers this will be sharper than the hedges derived in Cox and Ob\l\'oj \cite{Cox:2011ab, Cox:2011aa} which assumes only that vanilla options are liquid.

\subsection{Notation} \label{sec:notation}

Throughout the paper $M$ denotes a continuous uniformly integrable martingale and $B$ a standard real-valued Brownian motion. The running maximum and minimum of a Brownian motion $B$ or a martingale $M$ are denoted respectively $\uB_t=\sup_{u\leq t}B_u$ and $\lB_t=\inf_{u\leq t}B_u$, and similarly $\sM_t$ and $\iM_t$. The first hitting times of levels are denoted $H_x(B):=\inf\{t\geq 0: B_t=x\}$, $x\in \R$. Likewise we will consider $H_x(M)$ and $H_x(\omega)$, the first hitting times for a martingale $M$ and a continuous path $\omega$. Most of the time we simply write $H_x$ as it should be clear from the context which process/path we consider. We will use the hitting times primarily to express events involving the running maximum and minimum, e.g.\ note that $\dbmps=\indic{H_{\ub}\leq \tau < H_{\lb}}$ a.s.. We also introduce the following notation to indicate composition of stopping times: if $\tau_1, \tau_2$ are both stopping times, then the stopping time $(\tau_2 \circ \tau_1)(\omega) = \tau_1(\omega) + \tau_2(\theta_{\tau_1}(\omega))$, where $\theta_t(\omega)$ is the usual shift operator, $\theta_t:C(\R_+) \to C(\R_+)$ defined by $(\theta_t(\omega))_s = \omega_{t+s}$.

We use the notation $a \ll b$ to indicate that $a$ is \emph{much smaller} than $b$ -- this is only used to give intuition and is not rigorous. The minimum and maximum of two numbers are denoted $a\land b=\min\{a,b\}$ and $a\lor b=\max\{a,b\}$ respectively, and the positive part is denoted $a^+=a\lor 0$.

Finally, for a probability measure $\mu$ on $\re$ we let $-\infty\leq
\lmu<\rmu\leq \infty$ be the bounds of the support of $\mu$, i.e.\
$[\lmu,\rmu]$ is the smallest interval with
$\mu([\lmu,\rmu])=1$.

\section{Bounds for the probability of double exit/no-exit}
\label{sec:dtnt}
% :dbmp \psfrag{lb}{$\lb$} %
% \psfrag{ub}{$\ub$} %
% \psfrag{K1}{$K_1$} %
% \psfrag{K2}{$K_2$} %
% \psfrag{K3}{$K_3$} %
% \psfrag{K4}{$K_4$} %
% \psfrag{K}{$K$} %
% \psfrag{1}{$1$} %
% \psfrag{legend1}{Initial portfolio} %
% \psfrag{sub_dbmp_after_x}{Portfolio at $t=H_{\ub}<H_{\lb}\land T$} %
% \psfrag{sub_dbmp_after_xy}{Portfolio at $t=H_{\lb}>H_{\ub}$} %
% \psfrag{sup_dbmp_after_y}{Portfolio at $t=H_{\lb}$} %
% \psfrag{sup_dbmp_after_x}{Portfolio at $t=H_{\ub}$} %
%%%%%%%%%%% 

In this section we provide sharp bounds on the probability
\begin{equation*}
\Pr\left(\tntMevent\right)
\end{equation*}
where $\lb < 0 < \ub$, and $M=(M_t:t\leq \infty)$ is a continuous uniformly integrable martingale.  Our approach will involve two steps: first we provide pathwise inequalities which induce upper and lower bounds on the given event. Second, we show that these bounds are attained. More specifically, consider a continuous path $(\omega_t: 0\leq t\leq T)$, where $T\leq \infty$. We will introduce pathwise inequalities comparing $\tntwT$ to a sum of a ``static term," some function $f(\omega_T),$ and a ``dynamic term" of the generic form $\beta (\omega_T-b)\indic{H_b<T}$. Note that such a dynamic term is zero initially and, when $b$ is hit, it introduces a $\beta$-rotation of $f(\omega_T)$ around $b$. Note also that when evaluated on paths of a martingale, it will be a martingale. Consequently, we will construct random variables which dominate (or are dominated by) the random variable $\tntM$ and which can be decomposed into a martingale term and a function of the terminal value $M_\infty$. Bounds on the double exit/no-exit probability above will be obtained by taking expectations in these inequalities. We further claim that these bounds are tight. This is proven in the subsequent section, where we build extremal martingales by designing optimal solutions to the Skorokhod embedding problem for Brownian motion.

\subsection{Pathwise inequalities: upper bounds}
\label{sec:pathwise}

We need to consider three different inequalities. As we will see later, it is always optimal to use exactly one of them, and the choice depends on the distribution of $M_\infty$ and the values of $\ub,\lb$. We give the cases intuitive labels, their meaning will become clearer when we subsequently construct extremal martingales.  Throughout this and the next section we assume that $0<T\leq \infty$ is fixed and $(\omega_t: 0\leq t\leq T)$ is a given continuous function. The hitting times are relative to $\omega$.  To keep the notation simple we do not emphasise the dependence on $\omega$, e.g.\ $ H_{\lb}=H_{\lb}(\omega):= \inf\{t\leq T: \omega_t=\lb\}$, or $\uG^I(K)=\uG^I(K,(\omega_t:t\leq T))$.

\emph{$\uG^I$: upper bound for $\lb \ll 0<\ub$.}\\
The inequality is presented graphically in Figure~\ref{fig:dbmp_G1}. We can write it as:
\begin{eqnarray}
  \tntwT & \le & \frac{1}{(K-\lb)}\left((\omega_T - K)^+ - (\lb-\omega_T)^+ -
    (\omega_T - \lb) \indic{H_{\lb}<T}\right)+\indic{\omega_T>
    \lb}\nonumber \\
  & & {} =:  \uG^I(K), \label{eq:uG1def}
\end{eqnarray}
where we assume $K>\lb$. We include here the special case where $K=\infty$, which corresponds to the upper bound $\tntwT \le \indic{\omega_T \ge \lb}$.  Note that the coefficient $1/(K-\lb)$ is taken so that the right-hand side after rotation at time $H_{\lb}$ is zero above $K$.

\begin{figure}[th]
  \centering
  \begin{asy}[width=0.9\textwidth,height=0.4\textwidth,keepAspect=false]
    import graph;
    
    real xmin = -2;
    real xmax = 3;
    real eps = 0.25;

    pen p = black+1;
    pen q = black+0.75;

    draw((xmin,0)--(xmax+eps,0),q,Arrow);
    label("$B_{\tau}$",(xmax+eps,0),S);

    real lb = -1;
    real ub = 1;
    real K = 2;

    real slope = 1/(K-lb);
    
    pen p2 = deepblue+1+longdashdotted;

    draw((xmin,-slope*(lb-xmin))--(lb,0),p2,legend="$t \ge H_{\lb}$");
    draw((lb,1)--(K,1)--(xmax,1+(xmax-K)*slope),p2);

    pen p3 = deepgreen+1+dashed;

    draw((xmin,0)--(lb,0),p3,legend="$t < H_{\lb}$");
    draw((lb,1)--(K,0)--(xmax,0),p3);

    label("$\lb$",(lb,0),S);
    label("$\ub$",(ub,0),S);
    label("$K$",(K,0),S);

    real xlab = -1.8;
    real ylab = 1.8;
    real lablength = 0.8;
    real labskip = 0.3;
    real labextra = 0.2;

    frame fr = legend(invisible);

    fr = scale(0.75)*fr;

    add(fr,(xlab,ylab));
    
    roundbox(fr);

    // draw((xlab,ylab)--(xlab+lablength,ylab),p2);
    // label("$t \ge H_{\lb}$",(xlab+lablength+labextra,ylab),E,p);
    // draw((xlab,ylab+labskip)--(xlab+lablength,ylab+labskip),p3);
    // label("$t < H_{\lb}$",(xlab+lablength+labextra,ylab+labskip),E,p);
    
    clip(box((xmin-eps,-0.5-eps),(xmax+2*eps,2.25+eps)));

  \end{asy}
  % \begin{center}
  %   \includegraphics[width=14cm]{drawings/dbmp_sup1_via_xy}
  \caption{\label{fig:dbmp_G1} $\uG^I(K)$ in \eqref{eq:uG1def} providing an upper bound for $\tntwT$}
\end{figure}

\emph{$\uG^{II}$: upper bound for $\lb< 0 < \ub$.}\\
This is a fairly simple case: if we hit neither $\lb$ nor $\ub$, the inequality is simply $0 \le \alpha_1(\omega_T - \lb)$ for some $\alpha_1 >0$, so that the value is $1$ if we strike $\ub$ initially, and $0$ if we strike $\lb$ initially. This strategy is illustrated in Figure~\ref{fig:dbmp_G2}. If the path hits either $\ub$ or $\lb$ we have a constant value of either $1$ or $0$ respectively:
\begin{eqnarray}
  \tntwT & \le & \alpha_1 \omega_T - \alpha_0
  -\alpha_1(\omega_T-\ub)\indic{H_{\ub}< H_{\lb} \wedge T} - \alpha_1 (\omega_T - \lb)
  \indic{H_{\lb}< H_{\ub} \wedge T} \nonumber \\
  & & {} =:  \uG^{II}. \label{eq:uG2def}
\end{eqnarray}
The constraints on $\alpha_0,\alpha_1$ correspond to the need for the
function to be zero if $\lb$ is struck first, and $1$ if $\ub$ is
struck first. We deduce that
\begin{equation}
  \label{eq:uG2def_par}
  \begin{split}
    \alpha_0 & =  \lb/(\ub-\lb) \\
    \alpha_1 & = 1/(\ub-\lb).
  \end{split}
\end{equation}

\begin{figure}[th]
  \centering
%  \begin{asy}[width=\textwidth,height=0.4\textwidth,keepAspect=false]
  \begin{asy}[width=0.9\textwidth,height=0.4\textwidth,keepAspect=false]
    import graph;
    
    real xmin = -2;
    real xmax = 3;
    real eps = 0.25;

    pen p = black+1;
    pen q = black+0.75;

    draw((xmin,0)--(xmax+eps,0),q,Arrow);
    label("$B_{\tau}$",(xmax+eps,0),S);

    real lb = -1;
    real ub = 1;
    // real K = 2;

    real slope = 1/(ub-lb);
    
    pen p2 = deepblue+1+longdashdotted;
    pen p3 = deepgreen+1+dashed;
    pen p4 = deepred + 1;

    draw((xmin,-slope*(lb-xmin))--(xmax,slope*(xmax-ub)+1),p2,legend="$t
    < H_{\lb}\wedge H_{\ub}$");
    // draw((lb,1)--(K,1)--(xmax,1+(xmax-K)*slope),p2);

    draw((xmin,0)--(xmax,0),p3,legend="$H_{\lb} \le H_{\ub} \wedge t$");
    draw((xmin,1)--(xmax,1),p4,legend="$H_{\ub} \le H_{\lb} \wedge t$");

    label("$\lb$",(lb,0),S);
    label("$\ub$",(ub,0),S);
    //label("$K$",(K,0),S);

    real xlab = -1.8;
    real ylab = 1.8;
    real lablength = 0.8;
    real labskip = 0.3;
    real labextra = 0.2;

    frame fr = legend(invisible);

    fr = scale(0.75)*fr;

    add(fr,(xlab,ylab));
    
    roundbox(fr);

    // draw((xlab,ylab)--(xlab+lablength,ylab),p2);
    // label("$t \ge H_{\lb}$",(xlab+lablength+labextra,ylab),E,p);
    // draw((xlab,ylab+labskip)--(xlab+lablength,ylab+labskip),p3);
    // label("$t < H_{\lb}$",(xlab+lablength+labextra,ylab+labskip),E,p);

    clip(box((xmin-eps,-0.5-eps),(xmax+2*eps,2.25+eps)));

  \end{asy}

    \caption{\label{fig:dbmp_G2} $\uG^{II}$ in \eqref{eq:uG2def} providing an upper bound for $\tntwT$}
\end{figure}

\emph{$\uG^{III}$: upper bound for $\lb< 0 \ll \ub$.}\\
The final inequality uses the fact that $\tntwT \le \indic{\overline \omega_T
  \ge \ub}$, and that the inequality for the latter also works for the former. We can then rewrite (2.2) from Brown,
Hobson and Rogers \cite{Brown:01b} as
\begin{equation}
  \label{eq:uG3def}
  \tntwT \leq \frac{(\omega_T-K)^+}{\ub-K} + \frac{\ub-\omega_T}{\ub-K}
  \indic{\overline \omega_T \geq \ub} =: \uG^{III}(K),
\end{equation}
where $K<\ub$.

\subsection{Pathwise inequalities:  lower bounds}
\label{sec:dtnt_sub}
Observe that we have $\tntwT = 1-\rtntwT$ a.s. It follows that a
pathwise upper bound for $\tntwT$ corresponds to a pathwise lower bound
of $\rtntwT$, and vice versa. We will use this below to rephrase some
of the lower bounds as upper bounds.

\emph{$\lG_I$: lower bound for $\lb< 0 \ll \ub$}.\\
We let $\lG_I$ to be the trivial inequality that the probability is
bounded below by zero:
$\lG_I\equiv 0$.

\emph{$\lG_{II}$: lower bound for $\lb<0<\ub$.}\\
We describe an upper bound for
$\rtntwT$ which, as argued above, is equivalent to a lower bound for $\tntwT$. 
The inequality depends on two parameters $K_1$ and $K_2$
where $K_1 \ge \ub > K_2 \ge \lb$. The construction starts with equality
on the region $[K_2,\ub)$ and inequality elsewhere. The first time the
path hits $\ub$, we rotate to get equality (with zero) on
$[K_1,\infty)$ and so that the value is exactly 1 at $\lb$. If the
path later hits $\lb$, we again rotate to gain equality (with 1) on
$(-\infty,K_2]$ and $[\ub,K_1]$.  We write it as an inequality
\begin{equation}\label{eq:dbmpH3}
  \begin{split}
    \rtntwT\leq \ &\alpha_2(K_2-\omega_T)^++(1-\alpha_4)\indic{\omega_T<
      \ub}-\alpha_2(\omega_T-\ub)^++\alpha_1(\omega_T-K_1)^++\alpha_4\\ &{}+
    \beta_1(\omega_T-\ub)\indic{H_{\ub}<H_{\lb}\land
       T}+\beta_2(\omega_T-\lb)\indic{H_{\ub}<H_{\lb}\leq T}\\
    &{}+\beta_3(\omega_T-\lb)\indic{H_{\lb}<H_{\ub}\land T}\\
    &{}=:1-\lG_{II}(K_1,K_2),
  \end{split}
\end{equation}
which we present graphically in Figure \ref{fig:dbmp_H3}. It follows that $\lG_{II}(K_1,K_2)$ is a lower bound for $\tntwT$. We deduce immediately from the rotation conditions that $\beta_1=\alpha_2-\alpha_1$, $\beta_2=\alpha_1$ and $\beta_3=\alpha_2$. We have to satisfy two more constraints, namely that after hitting $\ub$ and rotating the function is zero on $[K_1,\infty)$ and one at $\lb$. Working out the values we have
\begin{equation}
  \label{eq:valuesdbmpH3}
  \left\{ \begin{array}{l}
      \alpha_1=\frac{1}{K_1-\lb}\\
      \alpha_2=\frac{\ub-\lb}{(K_1-\lb)(\ub-K_2)}\\
      \alpha_4=\frac{K_1-\ub}{K_1-\lb}
    \end{array} \right. 
  \quad \left\{ \begin{array}{l}
      \beta_1=\alpha_2-\alpha_1\\
      \beta_2=\alpha_1\\
      \beta_3=\alpha_2
    \end{array} \right. . 
\end{equation}
Observe that $\alpha_4\in (0,1]$ and $0<\alpha_1\le\alpha_2$.  We note
that if we hit $\lb$ before $\ub$ we have a strict inequality in \eqref{eq:dbmpH3}. Also, in the case where $K_2 = \lb$ a number of the terms simplify: in particular, the construction initially gives $\lG_{III} = 1$ for $\omega_T \in
[\lb,\ub)$ for $T< H_{\ub}$. More generally, we can also have $K_1 =
\ub$ (with or without also $K_2 = \lb$) and all the claims remain
true.

\begin{figure}[th]
  \centering
%  \begin{asy}[width=\textwidth,height=0.4\textwidth,keepAspect=false]
  \begin{asy}[width=0.9\textwidth,height=0.4\textwidth,keepAspect=false]
    import graph;
    
    real xmin = -2;
    real xmax = 3;
    real eps = 0.25;

    pen p = black+1;
    pen q = black+0.75;

    draw((xmin,0)--(xmax+eps,0),q,Arrow);
    label("$B_{\tau}$",(xmax+eps,0),S);

    real lb = -1.5;
    real ub = 0.75;
    real K1 = 1.75;
    real K2 = -0.25;

    real alpha1 = 1/(K1-lb);
    real alpha2 = (ub-lb)/((ub-K2)*(K1-lb));
    real alpha4 = (K1-ub)/(K1-lb);

    pen p2 = deepblue+1+longdashdotted;
    pen p3 = deepgreen+1+dashed;
    pen p4 = deepred + 1;

    draw((xmin,1+alpha2*(K2-xmin))--(K2,1)--(ub,1),p4,legend= "$t < H_{\ub} \wedge H_{\lb}$");
    draw((ub,alpha4)--(K1,alpha4-alpha2*(K1-ub))--(xmax,alpha4-alpha2*(xmax-ub)+alpha1*(xmax-K1)),p4);

    draw((xmin,alpha1*(K1-xmin))--(K2,alpha1*(K1-K2))--(ub,1),p2,legend="$H_{\ub}\le
    t <  H_{\lb}$");
    draw((ub,alpha4)--(K1,0)--(xmax,0),p2);

    draw((xmin,1)--(K2,1)--(ub,1-alpha4),p3,legend="$H_{\ub}< H_{\lb} \le  t$");
    draw((ub,1)--(K1,1)--(xmax,1+alpha1*(xmax-K1)),p3);

    label("$\lb$",(lb,0),S);
    label("$\ub$",(ub,0),S);
    label("$K_2$",(K2,0),S);
    label("$K_1$",(K1,0),S);

    real xlab = 1.25;
    real ylab = 2.15;
    real lablength = 0.8;
    real labskip = 0.3;
    real labextra = 0.2;

    frame fr = legend(invisible);

    fr = scale(0.75)*fr;

    add(fr,(xlab,ylab));
    
    roundbox(fr);

    // draw((xlab,ylab)--(xlab+lablength,ylab),p2);
    // label("$t \ge H_{\lb}$",(xlab+lablength+labextra,ylab),E,p);
    // draw((xlab,ylab+labskip)--(xlab+lablength,ylab+labskip),p3);
    // label("$t < H_{\lb}$",(xlab+lablength+labextra,ylab+labskip),E,p);

    clip(box((xmin-eps,-0.5-eps),(xmax+2*eps,2.25+eps)));
  \end{asy}
  \caption{\label{fig:dbmp_H3} $(1-\lG_{II}(K_1,K_2))$ in \eqref{eq:dbmpH3}--\eqref{eq:valuesdbmpH3} providing an upper bound for $\rtntwT=1-\tntwT$. The case where we hit $\lb$ before $\ub$ is not shown.}
\end{figure}

\emph{$\lG_{III}$: lower bound for $\lb \ll 0<\ub$}.\\
As previously, we describe an upper bound for $\rtntwT$. 
The inequality is represented in Figure \ref{fig:dbmp_H2} and depends on two values $K_1$ and $K_2$ such that $\lb < K_2 < K_1 < \ub$. The inequality starts with equality (equal to 1) between $K_1$ and $\ub$, and if we hit $\ub$ initially, we rotate to get equality (to 0) between $K_2$ and $K_1$.  If we hit $\lb$ after this, we rotate again to ensure the function is equal to 1 below $K_2$. If we initially hit $\lb$ rather than $\ub$, we rotate to get a function that is generally strictly greater than one. We write it as
\begin{equation}\label{eq:dbmpH2}
  \begin{split}
    \rtntwT\leq \ &
    \alpha_2(K_2-\omega_T)^++\alpha_1(K_1-\omega_T)^++\indic{\omega_T <
      \ub}-\alpha_1(\omega_T-\ub)^+\\ &{} +
    \beta_1(\omega_T-\ub)\indic{H_{\ub}<H_{\lb}\land
      T}+\beta_2(\omega_T-\lb)\indic{H_{\ub}<H_{\lb}\leq T}\\ &{}
    +\beta_3(\omega_T-\lb)\indic{H_{\lb}<H_{\ub}\land T}\\ &{}
    =:1-\lG_{III}(K_1,K_2),
  \end{split}
\end{equation}
and it follows that $\lG_{III}(K_1,K_2)$ is a lower bound for $\tntwT$. We deduce immediately from the rotation conditions that $\beta_1=\alpha_1$, $\beta_2=\alpha_2$ and $\beta_3=\alpha_1+\alpha_2$. We have to satisfy two more constraints, namely that after hitting $\ub$ and rotating, the function is zero on $(K_2,K_1)$ and one in $\lb$. Working out the values we have
\begin{equation}
  \label{eq:valuesdbmpH2}
  \left\{ \begin{array}{l}
      \alpha_1=\frac{1}{\ub-K_1}\\
      \alpha_2=\frac{1}{K_2 - \lb}
    \end{array} \right. 
  \quad \left\{ \begin{array}{l}
      \beta_1=\alpha_1\\ \beta_2=\alpha_2\\ \beta_3=\alpha_1+\alpha_2
    \end{array} \right. . 
\end{equation}
As in the
previous case, we have a strict inequality in \eqref{eq:dbmpH2} if the path hits
$\lb$ before $\ub$.

\begin{figure}[th]
  \centering
  % \begin{asy}[width=\textwidth,height=0.4\textwidth,keepAspect=false]
  \begin{asy}[width=0.9\textwidth,height=0.4\textwidth,keepAspect=false]
    import graph;
    
    real xmin = -2;
    real xmax = 3;
    real eps = 0.25;

    pen p = black+1;
    pen q = black+0.75;

    draw((xmin,0)--(xmax+eps,0),q,Arrow);
    label("$B_{\tau}$",(xmax+eps,0),S);

    real lb = -1.5;
    real ub = 2.25;
    real K1 = 1;
    real K2 = 0.25;

    real slope = -1/(K2-lb);
    real slope2 = 1/(ub-K1);

    pen p2 = deepblue+1+longdashdotted;
    pen p3 = deepgreen+1+dashed;
    pen p4 = deepred + 1;

    draw((ub,1)--(K1,1)--(K2,1+slope2*(K1-K2))--(xmin,1+slope2*(K1-xmin)+slope*(xmin-K2)),p4,legend=
    "$t < H_{\ub} \wedge H_{\lb}$");
    draw((ub,0)--(xmax,-slope2*(xmax-ub)),p4);

    draw((xmin,-slope*(K2-xmin))--(K2,0)--(K1,0)--(ub,1),p2,legend="$H_{\ub}<
    t <  H_{\lb}$");
    draw((ub,0)--(xmax,0),p2);
    // draw((lb,1)--(K,1)--(xmax,1+(xmax-K)*slope),p2);

    draw((xmin,1)--(K2,1)--(K1,1-slope*(K1-K2))--(ub,1-slope*(ub-K2)+slope2*(ub-K1)),p3,legend="$H_{\ub}
    < H_{\lb} \le  t$");
    draw((ub,1-slope*(ub-K2))--(xmax,1-slope*(xmax-K2)),p3);

    label("$\lb$",(lb,0),S);
    label("$\ub$",(ub,0),S);
    label("$K_2$",(K2,0),S);
    label("$K_1$",(K1,0),S);

    real xlab = -1.9;
    real ylab = 2.15;
    real lablength = 0.8;
    real labskip = 0.3;
    real labextra = 0.2;

    frame fr = legend(invisible);

    fr = scale(0.75)*fr;

    add(fr,(xlab,ylab));
    
    roundbox(fr);

    // draw((xlab,ylab)--(xlab+lablength,ylab),p2);
    // label("$t \ge H_{\lb}$",(xlab+lablength+labextra,ylab),E,p);
    // draw((xlab,ylab+labskip)--(xlab+lablength,ylab+labskip),p3);
    // label("$t < H_{\lb}$",(xlab+lablength+labextra,ylab+labskip),E,p);

    clip(box((xmin-eps,-0.5-eps),(xmax+2*eps,2.25+eps)));
  \end{asy}

  \caption{\label{fig:dbmp_H2} $(1-\lG_{III}(K_1,K_2))$ in \eqref{eq:dbmpH2}--\eqref{eq:valuesdbmpH2} providing an upper bound for $\rtntwT=1-\tntwT$. The case when we hit $\lb$  before $\ub$ is not shown.}
\end{figure}

\subsection{Probabilistic bounds}
\label{sec:prob_bounds}

We now consider the pathwise inequalities above evaluated on a path of a continuous uniformly integrable martingale $M=(M_t:0\leq t\leq \infty)$. This gives a.s.\ bounds on $\dbMps$.
By taking expectations we obtain bounds on the double exit/no-exit probabilities in terms of the distribution of $M_\infty$. Indeed, observe that each of the bounds we get
can be decomposed into two terms. The first of these depends on $M_\infty$ alone, for example, in \eqref{eq:dbmpH3}, the sum of the four quantities preceded by an $\alpha$. The second corresponds to a martingale and disappears when taking expectations, e.g.\ considering again \eqref{eq:dbmpH3}, the three terms which are preceded by a $\beta$ sum to give a term with expected value zero.

\begin{prop}
  \label{prop:prob_upperbound}
  Suppose $M=(M_{t}: 0\leq t\leq \infty)$ is a continuous uniformly integrable martingale. Then
  \begin{equation}\label{eq:generalbound_dbmp}
    \Pr\left( \tntMevent \right) \leq \inf\left\{\Ep{ \uG^{I}(K)},\Ep{\uG^{II}},\Ep{\uG^{III}(K')}\right\},
  \end{equation}
  where the infimum is taken over $0<K'<\ub<K$ and where
  $\uG^{I},\uG^{II},\uG^{III}$ are given by
  \eqref{eq:uG1def},\eqref{eq:uG2def}--\eqref{eq:uG2def_par}, and
  \eqref{eq:uG3def} respectively, evaluated on paths of $M$.
\end{prop}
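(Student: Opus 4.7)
The proposition follows immediately from the pathwise inequalities of Section~2.1 once they are integrated out, so the strategy is purely book-keeping. I would apply each of the three inequalities \eqref{eq:uG1def}, \eqref{eq:uG2def}, \eqref{eq:uG3def} to the sample path $(\omega_t)_{t\le \infty} = (M_t)_{t\le \infty}$ with $T=\infty$, obtaining three a.s.\ bounds for the indicator $\tntM$. Taking expectations makes the left-hand side equal to $\Pr(\tntMevent)$. Each right-hand side then decomposes into two parts: a ``static'' part that depends only on $M_\infty$ (and contributes the terms such as $(M_\infty-K)^+/(K-\lb)$, $\alpha_1 M_\infty - \alpha_0$, or $(M_\infty-K')^+/(\ub-K')$), and a ``dynamic'' part, which is a finite linear combination of expressions of the form $(M_\infty-b)\indic{A}$ with $A\in\Fc_\sigma$ and $M_\sigma = b$ on $A$, for a suitable stopping time $\sigma\in\{H_{\ub},H_{\lb},H_{\ub}\wedge H_{\lb}\}$. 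Finally one takes the infimum over the allowed $K$ and $K'$ to obtain \eqref{eq:generalbound_dbmp}.

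The only analytical ingredient is the claim that every dynamic term has zero expectation. Since $M$ is a continuous uniformly integrable martingale, the optional stopping theorem gives $\E[M_\infty\mid\Fc_\sigma]=M_\sigma$ for every stopping time $\sigma$. Hence for any $A\in\Fc_\sigma$ on which $M_\sigma=b$ (identified using continuity of $M$), one has
\[
\E\bigl[(M_\infty-b)\ind_A\bigr]=\E\bigl[\ind_A\,\E[M_\infty-b\mid\Fc_\sigma]\bigr]=\E\bigl[(M_\sigma-b)\ind_A\bigr]=0.
\]
Applying this with $\sigma=H_{\lb}$, $b=\lb$, $A=\{H_\lb<\infty\}$ kills the dynamic term in $\uG^I(K)$; with $\sigma=H_{\ub}\wedge H_{\lb}$, $b=\ub$ or $b=\lb$, $A=\{H_{\ub}<H_{\lb}\}$ or $\{H_{\lb}<H_{\ub}\}$, it kills the two dynamic terms in $\uG^{II}$; and with $\sigma=H_{\ub}$, $b=\ub$, $A=\{H_\ub<\infty\}=\{\sM_\infty\ge\ub\}$, it kills the dynamic term in $\uG^{III}(K')$ (noting that $(\ub-M_\infty)\ind_{\sM_\infty\ge\ub} = -(M_\infty-\ub)\ind_{H_\ub<\infty}$).

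Integrability of the static terms is immediate because $M_\infty\in L^1$ by uniform integrability, so the expectations $\E[\uG^I(K)]$, $\E[\uG^{II}]$, $\E[\uG^{III}(K')]$ are well-defined. Combining these three a.s.\ inequalities and taking expectations gives three upper bounds for $\Pr(\tntMevent)$; taking the infimum over $K>\ub$ and $0<K'<\ub$ yields \eqref{eq:generalbound_dbmp}.

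\textbf{Main obstacle.} There is no genuine obstacle: the entire content of the proposition has been packaged into the pathwise inequalities already verified. The one point requiring a line of justification is the vanishing of the dynamic (rotation) terms, but this is a direct application of optional stopping at $H_{\ub}$, $H_{\lb}$, and $H_{\ub}\wedge H_{\lb}$, all of which are legitimate thanks to uniform integrability of $M$.
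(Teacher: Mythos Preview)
Your proposal is correct and matches the paper's approach exactly: the paper does not give a formal proof of this proposition, but the paragraph immediately preceding it explains that the result follows by evaluating the pathwise inequalities of Section~2.1 on paths of $M$ and taking expectations, with the ``dynamic'' (rotation) terms having zero expectation because they are martingale increments. Your write-up simply makes explicit the optional stopping argument that justifies the vanishing of these dynamic terms, which is precisely the content the paper leaves implicit.
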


Our goal is to show that the above bound is optimal.  A key aspect of the above result is that the right hand-side of \eqref{eq:generalbound_dbmp} depends only on the distribution of $M_\infty$ and not on the law of the martingale $M$. We let $\mu$ be a probability measure on $\re$ with finite first moment. It is clear that we may then assume (subject to a suitable shift of the martingale) that the measure $\mu$ is centred. We also exclude the trivial case where $\mu = \delta_0$ from our arguments, so necessarily $\mu((-\infty,0))$ and $\mu((0,\infty))$ are both strictly positive.  We write $M\in \Mc_\mu$ to denote that $M$ is a continuous uniformly integrable martingale with $M_\infty\sim \mu$.
% \begin{assumption}\label{ass:mu}
%   The measure $\mu$ is an integrable probability measure on
%   $(-\infty,\infty)$ such that $\int x \, \mu(\dx) = 0$ and $\mu$ has
%   no atoms.
% \end{assumption}

In the arguments below, we will commonly want to discuss the measure
$\mu$ restricted to some interval. Moreover, in the case where there
is an atom of $\mu$ at a point $y$, it may become necessary to split
the atom into more than one part. It will be convenient therefore to
split the measure $\mu$ according to its quantiles. We therefore
introduce the notation $F(x) = \mu((-\infty,x])$ for the usual
distribution function of the measure $\mu$, and write $F^{-1}(q) =
\inf \{x \in \R: F(x) \ge q\}\vee \lmu$. Then for $p,q \in [0,1]$ with
$p \le q$ we define the sub-probability measures
\begin{equation}
  \label{eq:2}
  \mu_p^q((-\infty,x]) = (F(x) \wedge q - p) \vee 0 =: F_p^q(x).
\end{equation}
In addition, we will write $\mu^q = \mu^q_0$ and $\mu_p =
\mu_p^1$. Observe that $\mu_p^q(\R) = q-p$.

The {\it barycentre} of $\mu$ associates to a non-empty Borel set
$\Gamma\subset \re$ the mean of $\mu$ over $\Gamma$ via
\begin{equation}\label{eq:barycentre}
  \mu_B(\Gamma)=\frac{\int_\Gamma u\, \mu(\td u)}{\int_\Gamma \mu(\td u)}. 
\end{equation}
An obvious extension is to consider the barycentre of the measure
$\mu$ when restricted to $\mu_p^q$, which we denote by $m_p^q$, so
\begin{equation}
  \label{eq:3}
  m_p^q = \begin{cases} (q-p)^{-1}\int x \, \mu_p^q(\dx) & \text{ if
    } q>p\\
    F^{-1}(q) & \text{ otherwise}
  \end{cases}.
\end{equation}

Now fix $\lb, \ub \in \R$ with $\lb < 0 < \ub$. Of importance in our
constructions will be the following notions. Given $p$ with $p\le
F(\lb-)$, we want to find the probability $q$ such that $m_p^q =
\lb$. Specifically, define a function $\rho_-: [0,F(\lb-)] \to
[F(\lb),1]$ by
\begin{equation}
  \label{eq:4}
  \rho_-(p) = \inf \{ q \ge F(\lb) : m_p^q \ge \lb\}.
\end{equation}
Similarly, we can define $\rho_+: [F(\ub),1] \to
[0,F(\ub-)]$ by
\begin{equation}
  \label{eq:5}
  \rho_+(q) = \sup \{ p \le F(\ub-) : m_p^q \le \ub\}.
\end{equation}
It is straightforward to see that $\rho_-(p)$ and $\rho_+(q)$ are both continuous, strictly decreasing functions, and are well defined since $\lb < 0 = \int x \, \mu(\dx) < \ub$, so that the infimum in \eqref{eq:4} and the supremum in \eqref{eq:5} are both over non-empty sets. Further, note that we get:
\begin{equation}
  \label{eq:6}
  m_p^{\rho_-(p)} = \lb, m_{\rho_+(q)}^q = \ub
\end{equation}
for all $p \le F(\lb-)$ and all $q \ge F(\ub)$. Observe that the barycentre has two nice properties: first, if we rescale the measure $\mu$ by a constant, then the barycentre is unchanged. Second, if we wish to show that a measure $\mu$ has barycentre $b$, it is sufficient to show that
\begin{equation*}
  \int (x-b) \, \mu(dx) = 0,
\end{equation*}
independent of whether $\mu$ is a probability measure. In the case where $\mu$ is a probability measure $\mu_{B}(\R)$ is just the mean of the measure. Finally, we introduce the additional useful notation
\begin{equation*}
  \tm_{p}^q = (q-p) m_p^q.
\end{equation*}

Since the functions $\rho_+$ and $\rho_-$ are both continuous and strictly decreasing, their inverses are also continuous and strictly decreasing where defined --- for example, $\rho_+^{-1}$ maps $[\rho_+(1),F(\ub-)] \to [F(\ub),1]$.

A critical role in the construction of embeddings will be played by the following definition. Set
\begin{equation}
  \label{eq:1}
  \ps = \inf \left\{ p \in [\rho_+(1)\vee F(\lb),F(\ub-)] :
  \rho_+^{-1}(p)-p \le \frac{-\lb}{\ub-\lb}\right\} \wedge F(\ub-),
\end{equation}
where we use the standard convention that the infimum of an empty set is $\infty$. Since $\rho_+^{-1}(F(\ub-))= F(\ub)$, $\rho_+^{-1}(p)$ is continuous and $\lb < 0$, it follows that $\ps\in [\rho_+(1)\vee F(\lb),F(\ub-)]$.  Then we have the following theorem.
\begin{theorem}\label{thm:upper_price_mixed}(Upper bound)
The bound in \eqref{eq:generalbound_dbmp} is sharp. More precisely, let $\mu$ be a given centred probability measure on $\re$. Then exactly one of the following is true
  \begin{enumerate}
  \item[\fbox{I}] `$\lb \ll 0<\ub$': we have $\ps =F(\lb)$ and $\rho_+^{-1}(\ps) -\ps <
    -\lb(\ub-\lb)^{-1}$.\\
    Then there is a martingale $M\in \Mc_\mu$ such that
    \begin{equation*}
      \Pr(\tntMevent) =\Ep{ \uG^I(z^*)},
    \end{equation*}
    where $\uG^I$ is given by \eqref{eq:uG1def} evaluated on paths of $M$, and $z^* =
    F^{-1}(\xi)$ where $\xi$ solves
    \begin{equation} \label{eqn:xidefn} 
      \int (x-\lb) \, \mu_{F(\lb)}^\xi = -\lb.
    \end{equation}
    
  \item[\fbox{II}] `$\lb<0<\ub$':  we have $\rho_+^{-1}(\ps) -\ps \ge -\lb(\ub-\lb)^{-1}$.\\ 
    Then there is a martingale $M\in \Mc_\mu$ such that
        \begin{equation*}
      \Pr(\tntMevent) =\Ep{ \uG^{II}} = -\lb(\ub-\lb)^{-1},
    \end{equation*}
    where $\uG^{II}$ is given by
    \eqref{eq:uG2def}--\eqref{eq:uG2def_par} evaluated on paths of $M$.

  \item[\fbox{III}] `$\lb< 0 \ll \ub$': we have $\ps = \rho_+(1)$ and $1-\ps < -\lb(\ub-\lb)^{-1}$.  \\ 
    Then there is a martingale $M\in \Mc_\mu$ such that
    \begin{equation*}
      \Pr(\tntMevent) = \Ep{\uG^{III}(F^{-1}(\ps))},
    \end{equation*}
    where $\uG^{III}$ is given by \eqref{eq:uG3def} evaluated on paths of $M$.
  \end{enumerate}
\end{theorem}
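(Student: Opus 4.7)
The plan is to prove sharpness by constructing, in each of the three cases separately, an explicit stopping time $\tau$ of the Brownian motion $B$ such that $M_t:=B_{t\wedge\tau}$ is uniformly integrable with $B_\tau\sim\mu$ and such that, almost surely, equality holds in the corresponding pathwise inequality of Section \ref{sec:pathwise} when evaluated on $(B_s)_{s\le\tau}$. Taking expectations then delivers the matching lower bound on $\Pr(\tntMevent)$ and completes the proof via Proposition \ref{prop:prob_upperbound}. Before constructing the embeddings, I would verify that the three alternatives I, II, III are mutually exclusive and exhaustive. Since $\rho_+^{-1}$ is continuous and strictly decreasing on $[\rho_+(1)\vee F(\lb),F(\ub-)]$ and takes the value $F(\ub)$ at $F(\ub-)$, the function $p\mapsto \rho_+^{-1}(p)-p$ is continuous and strictly decreasing; together with the definition \eqref{eq:1} of $\ps$ this yields the trichotomy.

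The next preparatory step is to read off from each of $\uG^I(K)$, $\uG^{II}$, $\uG^{III}(K')$ the set of paths on which the inequality is saturated. For $\uG^{II}$ equality forces every path of $M$ to hit exactly one of $\ub,\lb$ and to terminate on the correct side: paths with $H_\ub<H_\lb$ must end with $M_\infty\ge\ub$ and never return to $\lb$, while paths with $H_\lb<H_\ub$ must end with $M_\infty\le\lb$ and never return to $\ub$. For $\uG^I(K)$ one further requires that paths hitting $\lb$ first terminate either at $\lb$ or in $[K,\infty)$ (after rotation through $\lb$), so that all of the measure $\mu$ on $[K,\infty)$ is embedded along paths visiting $\lb$ before $\ub$. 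The saturation set for $\uG^{III}(K')$ is the symmetric analogue, with all of $\mu|_{[\ub,\infty)}$ embedded along paths hitting $\ub$ first from strictly below.

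I would then produce the embeddings case by case using a quantile-based (Az\'ema--Yor-type) splitting of $\mu$. In Case II, run $B$ until $H_\ub\wedge H_\lb$; then on $\{H_\ub<H_\lb\}$ run an independent Az\'ema--Yor embedding of $\mu_{\rho_+(q^*)}^{q^*}$ (so that the conditional mean is $\ub$) onto $[\ub,\infty)\cup\{\ub\}$, and symmetrically on $\{H_\lb<H_\ub\}$ using the complementary portion of $\mu$; the condition $\rho_+^{-1}(\ps)-\ps\ge -\lb/(\ub-\lb)$ is precisely what ensures such quantile levels exist and allocate the correct total probabilities $-\lb/(\ub-\lb)$ and $\ub/(\ub-\lb)$ to $\{H_\ub<H_\lb\}$ and $\{H_\lb<H_\ub\}$ respectively. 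In Case I, the Case II allocation is infeasible because there is too much mass close to $\lb$; instead, using $\xi$ from \eqref{eqn:xidefn} and $z^*=F^{-1}(\xi)$, embed $\mu|_{(-\infty,z^*)}$ along paths that hit $\lb$ first (rotating the residual mass so that a piece lands in $[z^*,\infty)$ with conditional barycentre $\lb$, as \eqref{eqn:xidefn} guarantees), and embed $\mu|_{[z^*,\infty)}$ along paths visiting $\ub$ before $\lb$. Case III is handled by the obvious symmetric construction based on $\ps=\rho_+(1)$, with the roles of $\ub$ and $\lb$ exchanged and $K'=F^{-1}(\ps)$ playing the role of $z^*$.

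The final step is verification: uniform integrability of $B_{\cdot\wedge\tau}$ (from the finite first moment of $\mu$ and the fact that the distribution of $B_\tau$ has tails controlled by $\mu$), the identity $B_\tau\sim\mu$ (a direct computation using the quantile rule and the definitions of $\rho_\pm$, $m_p^q$), and pathwise equality in the chosen inequality (automatic by design, since the embedded paths are supported in the equality set identified above). The main obstacle I expect is Step 3 for Cases I and III: proving that the quantile construction, including the subsequent Az\'ema--Yor-type randomisations beyond $\ub$ or $\lb$, is consistent with the full measure $\mu$ (with mean-balance equations such as \eqref{eqn:xidefn} holding on each piece) and carefully handling the edge cases where $\mu$ has atoms at $\lb,\ub,z^*$ or $F^{-1}(\ps)$. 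The other steps are largely bookkeeping once the correct geometric picture is in place.
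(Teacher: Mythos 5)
Your overall strategy---construct explicit Skorokhod embeddings that saturate the pathwise inequalities and then time-change---is exactly the paper's approach, and your handling of the trichotomy and of Case \fbox{II} (run to $H_{\ub}\wedge H_{\lb}$, then embed the $\rho_+$-split pieces of $\mu$) is essentially correct. However, your descriptions of Cases \fbox{I} and \fbox{III} contain genuine errors.

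For Case \fbox{I}, your own equality analysis correctly notes that $\mu$ on $[K,\infty)$ must be embedded along paths that visit $\lb$; but the construction paragraph then assigns $\mu|_{[z^*,\infty)}$ to paths visiting $\ub$ \emph{before} $\lb$ and not returning. On such a path $\tntwT=1$ while $\uG^{I}(z^*)=1+(\omega_T-z^*)/(z^*-\lb)>1$, so equality fails. More importantly you miss the key ingredient: the measure embedded from $\ub$ is not $\mu|_{[z^*,\infty)}$ but $\mu_{F(\lb)}^{\xi}$ (the mass between the $F(\lb)$- and $\xi$-quantiles) \emph{together with an atom at $\lb$} of mass $-\lb/(\ub-\lb)-(\xi-F(\lb))>0$, giving a measure $\nu$ with barycentre $\ub$. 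Paths hitting $\ub$ first thus stop either in $(\lb,z^*]$ or return to $\lb$; all mass ending at $\lb$ (whether directly or via this return) then embeds $\sigma\propto \mu^{F(\lb)}+\mu_{\xi}$, whose barycentre is $\lb$. Without the return through $\lb$ the masses do not balance for general $\mu$: the probability $\mu((\lb,z^*])$ need not equal the first-exit probability $-\lb/(\ub-\lb)$.

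For Case \fbox{III}, it is \emph{not} the reflected analogue of Case \fbox{I}. The event $\{\tntMevent\}$ is not preserved under $\omega\mapsto -\omega$ (it maps to $\{\sM_\infty<\ub',\iM_\infty\le\lb'\}$, a different event), and the inequality $\uG^{III}(K')$ in \eqref{eq:uG3def} is simply the Brown--Hobson--Rogers/Az\'ema--Yor bound for $\Pr(\sM_\infty\ge\ub)$ and involves \emph{only} $\ub$, with no indicator of hitting $\lb$. Correspondingly the optimal construction in Case \fbox{III} never touches $\lb$: one runs to $H_{m^{\ps}}\wedge H_{\ub}$ (an interior level $m^{\ps}>\lb$, not $\lb$), then embeds $\mu_{\ps}/(1-\ps)$ from $\ub$ and $\mu^{\ps}/\ps$ from $m^{\ps}$. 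A construction mirroring Case \fbox{I} around a fixed $\lb$ would give equality in the wrong inequality and would fail to produce the stated value $\Ep{\uG^{III}(F^{-1}(\ps))}$.
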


In a similar manner to Proposition \ref{prop:prob_upperbound}, the
pathwise inequalities described in Section~\ref{sec:dtnt_sub} instantly imply
a lower bound on the double exit/no-exit probabilities:
\begin{prop}
  \label{prop:dtnt_lowerbound}
    Suppose $M=(M_{t}: 0\leq t\leq \infty)$ is a continuous uniformly integrable martingale. Then
  \begin{equation}\label{eq:generalbound_dbmp2}
    \Pr\left( \tntMevent\right) \geq \sup\left\{0,\Ep{\lG_{II}(K_1',K_2)},\Ep{ \lG_{III}(K_1,K_2)}\right\},
  \end{equation}
  where the supremum is taken over $\lb<K_2<K_1<\ub<K_1'$ and where
  $\lG_{II},\lG_{III}$ are given by
  \eqref{eq:dbmpH3}, \eqref{eq:valuesdbmpH3} and
  \eqref{eq:dbmpH2}, \eqref{eq:valuesdbmpH2} respectively, evaluated on paths of $M$.
\end{prop}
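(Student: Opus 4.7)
My plan mirrors the standard strategy behind Proposition~\ref{prop:prob_upperbound}: apply the pathwise inequalities from Section~\ref{sec:dtnt_sub} to the sample paths of the continuous UI martingale $M$ (with $T=\infty$), take expectations, and observe that each bound splits into a ``static'' term depending only on $M_\infty$ and a ``dynamic'' term built from the $\beta$-coefficients, which disappears under expectation. Concretely, evaluating \eqref{eq:dbmpH3} on paths of $M$ gives the a.s.\ inequality $\rtntM \le \bigl(\text{$\alpha$-terms in }M_\infty\bigr) + \bigl(\text{$\beta$-terms}\bigr) = 1 - \lG_{II}(K_1',K_2) + \bigl(\text{$\beta$-terms}\bigr)$, and similarly \eqref{eq:dbmpH2} gives $\rtntM \le 1 - \lG_{III}(K_1,K_2) + \bigl(\text{$\beta$-terms}\bigr)$. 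If I can show the $\beta$-terms have zero expectation, then taking expectations yields $1-\Pr(\tntMevent) \le \Ep{1-\lG_{II}(K_1',K_2)}$ and $1-\Pr(\tntMevent) \le \Ep{1-\lG_{III}(K_1,K_2)}$, which after rearrangement give the two nontrivial lower bounds; the trivial lower bound $0$ comes from $\lG_I \equiv 0$. Taking the supremum over admissible $K_1',K_1,K_2$ produces \eqref{eq:generalbound_dbmp2}.

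The only substantive step is checking that the three $\beta$-terms vanish in expectation, which is a direct application of optional stopping for the UI martingale $M$. Each term has the shape $\beta (M_\infty - c)\indic{A}$ with $c\in\{\ub,\lb\}$ and $A$ an event measurable with respect to $\mathcal{F}_\sigma$ for a stopping time $\sigma$ such that $M_\sigma = c$ on $A$. For the typical case $\beta_1(M_\infty-\ub)\indic{H_{\ub}<H_{\lb}\wedge\infty}$, I take $\sigma = H_{\ub}$; the event $\{H_{\ub}<H_{\lb}\}\cap\{H_{\ub}<\infty\}$ lies in $\mathcal{F}_{H_{\ub}}$, continuity of $M$ gives $M_{H_{\ub}}=\ub$ on $\{H_{\ub}<\infty\}$, and uniform integrability yields $\Ep{M_\infty\mid\mathcal{F}_{H_{\ub}}}=M_{H_{\ub}}$; hence
\begin{equation*}
\Ep{(M_\infty-\ub)\indic{H_{\ub}<H_{\lb}\wedge\infty}} = \Ep{\indic{H_{\ub}<H_{\lb}\wedge\infty}(M_{H_{\ub}}-\ub)}=0.
\end{equation*}
The terms with $\beta_2$ and $\beta_3$ are treated identically, taking $\sigma=H_{\lb}$ and using $M_{H_{\lb}}=\lb$ on $\{H_{\lb}<\infty\}$; in each case the relevant event is $\mathcal{F}_\sigma$-measurable and implies $\sigma<\infty$, so the same conditioning argument applies.

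No real obstacle arises: the pathwise inequalities already do the hard combinatorial work, and optional stopping for UI martingales is textbook. The mildest point to be careful about is the interpretation of indicators involving $H_{\lb}\le T$ when $T=\infty$; in all three $\beta$-terms the indicator forces $\sigma<\infty$ on its support, so the stopped value $M_\sigma$ is indeed equal to the relevant barrier and the conditioning argument carries through cleanly. Combining the three lower bounds produces \eqref{eq:generalbound_dbmp2}, completing the proof.
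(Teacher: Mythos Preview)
Your proof is correct and follows exactly the approach the paper indicates: it states that ``in a similar manner to Proposition~\ref{prop:prob_upperbound}, the pathwise inequalities described in Section~\ref{sec:dtnt_sub} instantly imply a lower bound,'' having already explained that the $\beta$-terms ``correspond to a martingale and disappear when taking expectations.'' One small notational point: by the paper's definitions \eqref{eq:dbmpH3} and \eqref{eq:dbmpH2}, the quantities $1-\lG_{II}$ and $1-\lG_{III}$ already \emph{include} the $\beta$-terms, so the inequality $\Pr(\tntMevent)\ge \Ep{\lG_{II}}$ follows immediately upon taking expectations, and your optional-stopping argument then serves to show that this bound depends only on the law of $M_\infty$.
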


We proceed to show that this lower bound is optimal. Write
\begin{equation}
  \label{eq:16}
  \gamma = 1-F(\ub-) + F(\lb),
\end{equation}
and consider the condition
\begin{equation}
  \label{eq:7}
  \tm_{F(\lb)}^{F(\ub-)} + \gamma \lb \ge 0.
\end{equation}
If this holds, then we can find $\lambda \in (F(\lb),F(\ub-)]$ such
that
\begin{equation}
  \label{eq:9}
  \tm_{F(\lb)}^{\lambda} + (1-\lambda + F(\lb)) \lb = 0
\end{equation}
since the left-hand side is increasing in $\lambda$ and runs between
$\lb$ and a term which is positive by \eqref{eq:7}. If \eqref{eq:7}
fails, we can imagine moving mass from an atom at $\lb$, to the right,
in the process moving the average of the mass upwards. In this case,
consider the condition
\begin{equation}
  \label{eq:8}
    \tm_{F(\lb)}^{F(\ub-)} + \gamma \ub \le 0.
\end{equation}
If \eqref{eq:7} fails, and \eqref{eq:8} holds, then we set $\xi =
F(\lb)$ and we can find $\lambda \in (0,\gamma]$ such that
\begin{equation}
  \label{eq:10}
  \tm_{F(\lb)}^{F(\ub-)} + \lambda \lb + (\gamma-\lambda) \ub = 0.
\end{equation}
Given such a $\lambda$, we will show that there exists $\ps \in
[F(\ub-),1)$ such that
\begin{equation}
  \label{eq:11}
  \tm^{\xi} + \tm_{F(\ub-)}^{\ps} = \lb ( \xi + \ps - F(\ub-)).
\end{equation}

If \eqref{eq:8} also fails, and 
\begin{equation}
  \label{eq:12}
  \text{either } \rho_-(0) \ge F(\ub-) \text{ or }
    \rho_-(0) < F(\ub-) \text{ and }
  \tm_{\rho_-(0)}^{F(\ub-)} + \ub ( 1-F(\ub-) + \rho_-(0)) >0
\end{equation}
then there exists $\xi \in (F(\lb),\rho_-(0)\wedge F(\ub-))$ such that
\begin{equation}
  \label{eq:13}
  \tm_{\xi}^{F(\ub-)} + \ub (1-F(\ub-) + \xi) = 0.
\end{equation}
Then we define $\ps$ as the solution to \eqref{eq:11} again. 

Finally, if \eqref{eq:7}, \eqref{eq:8} and \eqref{eq:12} all fail,
then there exists $\ps \in [\rho_-(0),F(\ub-))$ such that
\begin{equation}
  \label{eq:14}
  \tm_{\ps}^{F(\ub-)} + \ub(1-F(\ub-) + \ps) =0.
\end{equation}

\begin{theorem}\label{thm:lower_price_mixed}(Lower bound)
The bound in \eqref{eq:generalbound_dbmp2} is sharp. More precisely, let $\mu$ be a given centred probability measure on $\re$. Then exactly one of the following is true:
  \begin{enumerate}
  \item[\fbox{I}] `$\lb< 0 \ll \ub$': condition \eqref{eq:7} holds.\\
    Then there is a martingale $M\in \Mc_\mu$
    such that $\Pr(\tntMevent) = 0 = \Ep{\lG_{I}}$.

  \item[\fbox{II}] `$\lb<0<\ub$': condition \eqref{eq:7} fails, and either \eqref{eq:8} holds or
    \eqref{eq:8} fails and \eqref{eq:12} holds.\\
    Then there is a martingale $M\in \Mc_\mu$ such that
      \begin{equation}
        \label{eq:15}
        \Pr(\tntMevent) = \Ep{\lG_{II}(\ps,\xi)},
      \end{equation}
      where $\lG_{II}$ is given via \eqref{eq:dbmpH3} and
      \eqref{eq:valuesdbmpH3}, evaluated on paths of $M$, and $\ps$ solves \eqref{eq:11}.

    \item[\fbox{III}] `$\lb \ll 0<\ub$': conditions \eqref{eq:7}, \eqref{eq:8} and \eqref{eq:12} fail. \\
      Then there is a martingale $M\in \Mc_\mu$ such that
      \begin{equation}
        \label{eq:17}
        \Pr(\tntMevent) = \Ep{\lG_{III}(\ps,\rho_-(0))}
      \end{equation}
      where $\lG_{II}$ is given via \eqref{eq:dbmpH2} and
      \eqref{eq:valuesdbmpH2}, evaluated on paths of $M$, and $\ps$ is given by \eqref{eq:14}.
    \end{enumerate}
  \end{theorem}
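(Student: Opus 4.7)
The strategy is to construct, in each of the three cases, an explicit solution $\tau$ to the Skorokhod embedding problem for $\mu$ in a Brownian motion $B$ such that $M_t := B_{t\wedge \tau}$ attains equality in the corresponding pathwise lower bound of Proposition~\ref{prop:dtnt_lowerbound}; combined with that proposition, this yields sharpness of \eqref{eq:generalbound_dbmp2}. The starting point is to determine, for each pathwise inequality $\lG_I, \lG_{II}, \lG_{III}$, precisely when equality holds almost surely. Inspection of Figures~\ref{fig:dbmp_H3} and \ref{fig:dbmp_H2} shows that equality forces $B_\tau$ to lie, on each of the events $\{H_\ub < H_\lb \wedge \tau\}$, $\{H_\ub < H_\lb \le \tau\}$, $\{H_\lb < H_\ub \wedge \tau\}$ and the event of avoiding both barriers, in explicit subsets of $\R$. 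This dictates the structural recipe for the extremal embedding in each case.

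In Case~I, condition \eqref{eq:7} provides the mass-and-mean budget needed to construct $\tau$ under which every Brownian path that reaches $\ub$ has previously crossed below $\lb$, so that $\{\tntMevent\}$ is a null set. Using the quantile decomposition, I would split $\mu$ into $\mu^{F(\lb)}$, $\mu_{F(\lb)}^{\lambda}$ and $\mu_\lambda$, where $\lambda$ is the unique value determined by \eqref{eq:9}. Equation \eqref{eq:9} is exactly the statement that $\mu^{F(\lb)}+\mu_\lambda$ has barycentre $\lb$, so the combined tail piece can be consistently embedded only after the path has first visited $\lb$ (via a standard Az\'ema--Yor type embedding from level $\lb$); meanwhile $\mu_{F(\lb)}^{\lambda}$ is embedded before $H_\ub \wedge H_\lb$. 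Since the right tail $\mu_\lambda$ is accessed only after $H_\lb$, any excursion up to $\ub$ must have been preceded by a visit to $\lb$, forcing $\Pr(\tntMevent)=0$.

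In Cases~II and~III, I would again partition Brownian paths according to the first passage through $\{\lb,\ub\}$ and assign to each branch a sub-measure of $\mu$ with matching barycentre. The parameters $\xi,\ps$ defined implicitly by \eqref{eq:10}--\eqref{eq:14} encode the barycentre conditions for: a ``middle'' sub-measure supported in the interior $(\lb,\ub)$ and embedded on paths avoiding both barriers via an Az\'ema--Yor rule; a sub-measure supported on $(-\infty,\lb]\cup[\ub, F^{-1}(\ps)]$ embedded after $H_\ub$ via a Jacka/tilted-Jacka-type construction; and the residual mass embedded after $H_\lb$. The coefficients $\alpha_i,\beta_j$ in \eqref{eq:valuesdbmpH3}--\eqref{eq:valuesdbmpH2} are calibrated precisely so that the affine expression on the right-hand side of \eqref{eq:dbmpH3}, respectively \eqref{eq:dbmpH2}, coincides with $1-\tntwT$ whenever $B_\tau$ lies in the prescribed support, so once the composite embedding is shown to place mass in those support regions the pathwise inequality saturates automatically.

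The main technical obstacles are threefold. First, one must verify that the conditions \eqref{eq:7}, \eqref{eq:8}, \eqref{eq:12} are exhaustive and mutually exclusive and that $\lambda,\xi,\ps$ defined via \eqref{eq:9}--\eqref{eq:14} are well-defined in their stated intervals; this follows from continuity and strict monotonicity of $q\mapsto \tm_p^q$, $\rho_+$, $\rho_-$ together with intermediate-value arguments and the centering of $\mu$. Second, one must check that each sub-embedding yields a uniformly integrable continuous martingale, which reduces to the finite first moment of $\mu$ combined with the barycentre-matching equations. Third, and most delicate, one must confirm that the composite stopping rule does embed $\mu$ (so $M\in\Mc_\mu$) and that $B_\tau$ a.s.\ lies in the support region identified by the equality analysis; the latter is a careful bookkeeping argument matching the law of the sub-embedding on each branch $\{H_\ub < H_\lb\}$, $\{H_\lb < H_\ub\}$, $\{H_\ub \wedge H_\lb > \tau\}$ against the corresponding piece of $\mu$.
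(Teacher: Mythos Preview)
Your overall strategy is the paper's: build, in each case, a multi-stage Skorokhod embedding that forces almost-sure equality in the corresponding pathwise inequality, and your Case~\fbox{I} sketch is accurate. Two points, however, need correction before the argument goes through in Cases~\fbox{II} and~\fbox{III}.

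First, the opening phrase ``partition Brownian paths according to the first passage through $\{\lb,\ub\}$'' is not what is required: the initial stage embeds a measure $\chi$ supported on $[\lb,\ub]$ (interior mass plus atoms at the endpoints), so a positive fraction of paths stops strictly inside without ever reaching either barrier. This is how the ``middle'' piece you mention is realised; it is not an Az\'ema--Yor construction, and no specific embedding is needed --- any UI embedding of a measure supported in $[\lb,\ub]$ automatically stays in $[\lb,\ub]$.

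Second, and more substantively, your identification of the sub-measure embedded after $H_{\ub}$ is wrong. Saturating $\lG_{II}$ on the event $\{H_{\ub}\le \tau < H_{\lb}\}$ forces $B_\tau \ge K_1 = F^{-1}(\ps)$, not $B_\tau \in (-\infty,\lb]\cup[\ub,F^{-1}(\ps)]$ as you write. Accordingly, the paper runs from $\ub$ to a measure $\nu$ supported on $\{\lb\}\cup[F^{-1}(\ps),\infty)$ with barycentre $\ub$ (the atom at $\lb$ feeding the third stage), and only afterwards, from $\lb$, embeds $\sigma$ supported on $(-\infty,F^{-1}(\xi)]\cup[\ub,F^{-1}(\ps)]$ with barycentre $\lb$. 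Your stated support cannot have barycentre $\ub$, so the second stage as you describe it would not be a valid embedding from $\ub$. The analogous mis-identification occurs in Case~\fbox{III}, where $\nu$ is supported on $\{\lb\}\cup[F^{-1}(\rho_-(0)),F^{-1}(\ps)]\cup[\ub,\infty)$. Once these supports are corrected, the barycentre verifications (which the paper carries out explicitly using \eqref{eq:10}--\eqref{eq:14}) go through and the rest of your outline is sound.
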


\begin{remark}
  Throughout the paper, we have assumed that $(M_t)_{t \ge 0}$ has
  continuous paths. This assumption can be relaxed. It
  is relatively simple to see that if we only assume that barriers
  $\lb,\ub$ are crossed in a continuous manner then all of our results
  remain true. If we only assume that $(M_t)$ has c\`adl\`ag paths
  then the situation is more complex. The optimal behaviour
  will essentially be as before, but we can use jumps to hide some of
  the occasions where a barrier is hit. More precisely, consider the
  continuous martingale $M$ given in Theorem~\ref{thm:upper_price_mixed} and, for $\eps > 0$, consider
  the time-change:
  \[
  \rho^\eps_t = \inf\{u\ge t : M_u \in [\lb+\eps,\infty)\}.
  \]
  Then $N_t = M_{\rho^\eps_t}$ is a UI martingale which excludes paths
  of $M_t$ where the minimum goes below $\lb+\eps$, but which later
  return above $\lb+\eps$. In general, any possible martingale $M_t$
  can be improved by performing such an operation, and so this
  suggests that an optimal discontinuous model can be chosen in such a
  manner that it is continuous on $[\lb+\eps,\infty)$ and only takes
  values on $(-\infty,\lb]$ if it is the final value of the
  martingale. This observation can be used as a starting point for an
  analysis similar to that given above to determine the optimal
  martingale models for a given measure. We do not pursue the details here.
\end{remark}

\section{Proofs that the bounds are sharp via new solutions to the Skorokhod embedding problem}
\label{sec:proofs}

In this section we prove Theorems~\ref{thm:upper_price_mixed} and \ref{thm:lower_price_mixed}. We do this by constructing new solutions to the Skorokhod embedding problem for a Brownian motion $B$.  Specifically, we will construct stopping times $\tau$ such that $B_{\tau} \sim \mu$, $(B_{t \wedge \tau}:t\geq 0)$ is UI and equalities are attained almost surely in the inequalities of Sections \ref{sec:pathwise}--\ref{sec:dtnt_sub}. It is then straightforward to see that martingales required in Theorems~\ref{thm:upper_price_mixed} and \ref{thm:lower_price_mixed} are given by $M_t:= B_{t \wedge \tau}$.

We will use below some well known facts about the existence of Skorokhod embeddings. Specifically, given a measure $\mu$ with mean $m$ and a Brownian motion $B$ with $B_0 = m$, then there exists a stopping time $\tau$ such that $B_{\tau} \sim \mu$ and $(B_{t \wedge \tau}: t \geq 0)$ is uniformly integrable. Moreover, it follows from uniform integrability that if the measure $\mu$ is supported on a bounded interval, then the process will stop before the first exit time of the interval.

\begin{proof}[Proof of Theorem \ref{thm:upper_price_mixed}]

  We take $B=(B_t:t\geq 0)$ a standard real-valued Brownian motion. All the hitting times $H_{\bullet}$ below are for $B$.  As described above, we will prove this result by constructing a stopping time $\tau$ such that $B_{\tau}$ has the distribution $\mu$, and such that the conjectured bounds hold for the corresponding continuous time martingale which is the stopped process.

  From the definition of $\ps$ in \eqref{eq:1} it is clear that at least one of the cases holds. Clearly \fbox{II} excludes the other two. To show that \fbox{I} and \fbox{III} are exclusive, as $\rho_+^{-1}(\rho_+(1))=1$, it suffices to argue that the following is impossible
  \begin{equation}\label{eqn:imposs_for_y}
    \ps = \rho_+(1) = F(\lb) > \ub(\ub-\lb)^{-1}.
  \end{equation}
  Assume \eqref{eqn:imposs_for_y} holds. From the last condition we get $\ub (1-\ps) < - \lb \ps$, and using the fact that $\ps = \rho_+(1)$, this can be expressed as $\int x \,\mu_{\ps}(\dx) + \lb \ps < 0$. However $\ps \ge F(\lb)$ implies that this is greater than or equal to $\int x \, \mu(\dx) = 0$ giving a contradiction. We conclude that the cases \fbox{I}, \fbox{II} and \fbox{III} are exclusive.

  We now show the existence of a suitable embedding. We consider
  initially the case \fbox{I}. We first note that the solution $\xi$
  of \eqref{eqn:xidefn} is in $(\rho_+^{-1}(\ps),1]$. Since
  \begin{eqnarray*}
    \int (x - \lb) \, \mu_{F(\lb)}^{\rho_+^{-1}(F(\lb))}(\dx) & = &
    \int (x - \ub) \mu_{F(\lb)}^{\rho_+^{-1}(F(\lb))}(\dx) +
    \int (\ub - \lb) \, \mu_{F(\lb)}^{\rho_+^{-1}(F(\lb))}(\dx) \\
    & = & (\ub - \lb)\left( \rho_+^{-1}(F(\lb))-F(\lb)\right) \\
    & < & - \lb,
  \end{eqnarray*}
  we conclude that $\xi > \rho_+^{-1}(\ps)$. To see that $\xi \le 1$,
  we note:
  \begin{equation*}
    \int (x - \lb) \, \mu_{F(\lb)} (\dx) \ge \int (x - \lb) \, \mu
    (\dx) = -\lb.
  \end{equation*}
  Since the expression $\int (x-\lb) \, \mu_{F(\lb)}^\xi(\dx)$ is strictly increasing and continuous in $\xi$, there is a unique $\xi$. For this value of $\xi$, we now define a measure $\nu$ by
  \begin{equation*}
    \nu = \left[ -\frac{\lb}{\ub-\lb} - (\xi - F(\lb))\right] \delta_{\lb} + \mu_{F(\lb)}^\xi. 
  \end{equation*}
  Observe that the atom at $\lb$ has mass greater than or equal to
  zero, and by construction, $\nu$ has total mass $-\lb(\ub-\lb)^{-1}$
  and barycentre $\ub$ since
  \begin{eqnarray*}
    \int (x-\ub) \, \nu(\dx) & = &  \int (x-\ub) \, \mu_{F(\lb)}^\xi(\dx)
    + \left[-\frac{\lb}{\ub-\lb} - (\xi - F(\lb))\right] (\lb-\ub)\\
    & = & (\lb-\ub)(\xi-F(\lb)) - \lb + \left[-\frac{\lb}{\ub-\lb} -
      (\xi - F(\lb))\right] (\lb-\ub)\\
    & = & 0.
  \end{eqnarray*}
    
  We now show that this means we can construct a suitable embedding. The idea will be initially to run until the first time we hit either of $\ub$ or $\lb$. The mass that hits $\ub$ first will then be used to embed $\nu$, and all the mass that hits $\lb$ (which will include the atomic term from $\nu$) can then be embedded in the remaining areas, $(0,\lb] \cup [F^{-1}(\xi),\infty)$. So suppose we are in case \fbox{I}, and let $\tau_1$ be first time we hit one of $\lb$ or $\ub$, so $\tau_1 = H_{\ub} \wedge H_{\lb}$. Then $\Pr(B_{\tau_1} = \ub) = -\lb(\ub-\lb)^{-1}$. Let $\tau_2$ be a UI embedding of the probability measure $-\frac{\ub-\lb}{\lb}\nu$ given $B_0 = \ub$ and let $\tau_3$ be a UI embedding of $\sigma$ given $B_0 = \lb$, where
  \begin{equation*}
    \sigma = \frac{\left(\mu^{F(\lb)}+\mu_{\xi}\right)}{F(\lb)+1-\xi}.
  \end{equation*}
  It can be verified that $\sigma$
  has %$\nu^3$ has mass $q^* + (\ub-S_0)/(\ub-\lb)$
  barycentre $\lb$ since
  \begin{equation*}
    \int (x-\lb) \left(\mu^{F(\lb)}+\mu_{\xi}\right)(\dx) = \int
    (x-\lb) \, \mu(\dx) - \int (x-\lb) \, \mu_{F(\lb)}^\xi(\dx) = 0.
  \end{equation*}
  Then (recalling the definition in Section~\ref{sec:notation}) we set
  \begin{eqnarray*}
    \tau & := & \tau_2 \circ \tau_1\indic{\tau_1 = H_{\ub}}
    \indic{\tau_2 \circ \tau_1 < H_{\lb}}\\
    && {} +  \tau_3 \circ \tau_1 \indic{\tau_1 = H_{\lb}}\\
    && {} +  \tau_3 \circ \tau_2 \circ \tau_1 \indic{\tau_1 = H_{\ub}}
    \indic{\tau_2 \circ \tau_1 = H_{\lb}}.
  \end{eqnarray*}
  We see that $\tau$ is a UI embedding of $\mu$, and moreover $\tau$
  is such that $ \dbmps= \uG^I(F^{-1}(\xi))$ a.s..

  Consider now case \fbox{II}. Suppose initially that in addition, $\rho_+^{-1}(\ps)-\ps = -\lb (\ub-\lb)^{-1}$. We define measures $\nu$ and $\sigma$ by:
  \begin{eqnarray*}
    \nu & = & \frac{1}{\rho_+^{-1}(\ps)-\ps} \mu_{\ps}^{\rho_+^{-1}(\ps)}\\
    \sigma & = & \frac{1}{1+\ps-\rho_+^{-1}(\ps)}
    \left(\mu^{\ps} + \mu_{\rho_+^{-1}(\ps)}\right).
  \end{eqnarray*}
  Then $\nu$ has barycentre $\ub$, while $\sigma$ has barycentre
  $\lb$. Let $\tau_1$ be as above, $\tau_2$ be a UI embedding of
  $\nu$ given $B_0 = \ub$ and $\tau_3$ be a UI embedding of $\sigma$
  given $B_0 = \lb$. Then the stopping time
  \begin{eqnarray*}
    \tau & := & \tau_2 \circ \tau_1 \indic{\tau_1 = H_{\ub}} \\
    && {} + \tau_3 \circ \tau_1 \indic{\tau_1 = H_{\lb}}
  \end{eqnarray*}
  is a UI embedding of $\mu$, and $B_{t\wedge \tau}$ satisfies
  $\dbmps=\uG^{II}$ \as{} where $\uG^{II}$ is the random variable
  defined in \eqref{eq:uG2def}, evaluated on paths of $B$.

  The case where $\rho_+^{-1}(\ps)-\ps > -\lb (\ub-\lb)^{-1}$ is almost identical --- observe that in this case, there must be an atom of $\mu$ at $\ub$ with $F(\ub)-F(\ub-) > -\lb (\ub-\lb)^{-1}$. However, the argument above works without alteration if we take:
  \begin{eqnarray*}
    \nu & = & \frac{1}{-\lb (\ub-\lb)^{-1}} \delta_{\ub}\\
    \sigma & = & \frac{1}{1+ \lb (\ub-\lb)^{-1}}(\mu-\nu).
  \end{eqnarray*}

  Finally we consider \fbox{III}. Then define measures $\nu$ and
  $\sigma$ by:
  \begin{eqnarray*}
    \nu & = & \frac{1}{1-\ps} \mu_{\ps}\\      
    \sigma & = & \frac{1}{\ps}\mu^{\ps}.
  \end{eqnarray*}
  So the barycentre of $\nu$ is $\ub$, and the
  barycentre of $\sigma$ is $m^{\ps}$. Define $\tau_1$ to be the first
  hitting time of $\{m^{\ps},\ub\}$, so $\tau_1 = H_{m^{\ps}}\wedge H_{\ub}$, then $\Pr(B_{\tau_1} = \ub) =
  \ps=- m^{\ps}(\ub - m^{\ps})^{-1}$. We may then proceed as
  above, so we define $\tau_2$ to be a UI embedding of $\nu$ given
  $B_0 = \ub$ and $\tau_3$ to be a UI embedding of $\sigma$ given $B_0
  = m^{\ps}$. Then the stopping time
  \begin{eqnarray*}
    \tau & := & \tau_2 \circ \tau_1 \indic{\tau_1 = H_{\ub}} \\
    && {} + \tau_3 \circ \tau_1 \indic{\tau_1 = H_{m^{\ps}}}
  \end{eqnarray*}
  is a UI embedding of $\mu$, and satisfies $\dbmps=\uG^{III}(F^{-1}(\ps))$
  \as{} where $\uG^{III}(\cdot)$ is the random variable defined in
  \eqref{eq:uG3def}, evaluated on paths of $B$.
\end{proof}

\begin{proof}[Proof of Theorem \ref{thm:lower_price_mixed}]
The setup, and general methodology, is analogous to the proof of Theorem \ref{thm:upper_price_mixed} above.

  It follows from their respective definitions that exactly one of
  \fbox{I}, \fbox{II} and \fbox{III} holds.

  Suppose \fbox{I} holds, so that \eqref{eq:7} is true. Then, by
  continuity, there exists $\lambda\in (F(\lb),F(\ub-)]$ such that
  \eqref{eq:9} holds (taking $\lambda = F(\lb)$ gives $\lb$ on the
  left hand side of \eqref{eq:9}). Let $\tau_1$ be a UI
  embedding of
  \begin{equation}
    \label{eq:18}
    \chi = \mu_{F(\lb)}^\lambda + (1-\lambda + F(\lb))\delta_{\lb}
  \end{equation}
  in the Brownian motion starting at 0, and observe that the measure
  \begin{equation*}
    \nu = \frac{\mu^{F(\lb)} + \mu_{\lambda}}{1-\lambda + F(\lb)}
  \end{equation*}
 has mean $\lb$, which follows since:
 \begin{align*}
   (1-\lambda + F(\lb)) \int x \nu(\dx) & = \tm^{F(\lb)} +
   \tm_{\lambda}\\
   & = -\tm_{F(\lb)}^{\lambda} = \lb(1-\lambda + F(\lb)).   
 \end{align*}
 Let $\tau_2$ be a UI embedding of $\nu$ in a Brownian motion starting
 from $B_0=\lb$. Finally define
  $$\tau:=\tau_1\indic{B_{\tau_1}\neq
    \lb}+\tau_2\circ\tau_1\indic{B_{\tau_1}=\lb},$$ which is a UI
  embedding of $\mu$ in the Brownian motion $B$. Note that
  $\overline{B}_\tau\geq \ub$ only if $\underline{B}_\tau\leq \lb$. It
  follows that $\dbmps=0=\lG_{I}$ a.s. 

  Suppose now that \fbox{II} holds. We consider separately the case
  where \eqref{eq:7} fails and \eqref{eq:8} holds, and the case where
  both \eqref{eq:7} and \eqref{eq:8} fail, but \eqref{eq:12}
  holds. First suppose \eqref{eq:8} holds. Then
  \begin{equation*}
    \lambda  \mapsto \tm_{F(\lb)}^{F(\ub-)} + \lambda \lb + (\gamma -
    \lambda) \ub
  \end{equation*}
  is continuous, and strictly negative for $\lambda = 0$ and positive for
  $\lambda = \gamma$. Hence there exists $\lambda \in (0,\gamma]$ such
  that \eqref{eq:10} holds. Fix $\xi = F(\lb)$ and consider
  \begin{equation*}
    [F(\ub-),1) \ni \ps \mapsto \tm^{\xi} + \tm_{F(\ub-)}^{\ps} - \lb ( \xi + \ps - F(\ub-)).
  \end{equation*}
  In the limit as $\ps \to 1$, the expression simplifies to
  $-\tm^{F(\ub-)}_{F(\lb)}-\gamma\lb$ which is strictly positive since
  \eqref{eq:7} is assumed to fail, while if $\ps = F(\ub-)$ the
  expression simplifies to $\tm^{F(\lb)}-\lb F(\lb)$, which is
  non-positive, since $\tm^{F(\lb)} = \int x \, \mu^{F(\lb)}(\dx) \le
  \int \lb\, \mu^{F(\lb)}(\dx)$. Hence there is a unique $\ps$
  satisfying \eqref{eq:11}.

  Now define a measure
  \begin{equation*}
    \chi = \mu_{F(\lb)}^{F(\ub-)} + \lambda \delta_{\lb} +
    (\gamma-\lambda) \delta_{\ub}.
  \end{equation*}
  From \eqref{eq:10} it follows that $\chi$ is centered, and we embed this initially. The mass which arrives at $\ub$ will then run to the measure
  \begin{equation*}
    \nu = \frac{(\gamma-\lambda -(1-\ps))\delta_{\lb} + \mu_{\ps}}{\gamma-\lambda}
  \end{equation*}
  which has mean $\ub$ by the following computation:
  \begin{align*}
    (\gamma - \lambda) \int x \, \nu(\dx) & = \lb (\gamma-\lambda
    -(1-\ps)) + \tm_{\ps}\\
    & = \lb (\gamma-\lambda -(1-\ps)) - \tm_{F(\ub-)}^{\ps} -
    \tm_{F(\lb)}^{F(\ub-)} - \tm^{F(\lb)} \\
    & = \lb (\gamma-\lambda -(1-\ps)) - \lb(\xi + \ps - F(\ub-))  +
    \lambda \lb + (\gamma - \lambda) \ub\\
    & = \lb (\gamma -1 -\xi + F(\ub-)) + \ub (\gamma-\lambda).
  \end{align*}
  Here we have used \eqref{eq:10}, \eqref{eq:11} and the fact that $\xi = F(\lb)$. From the definition of $\gamma$ in \eqref{eq:16}, the desired conclusion follows.

  Finally, we embed the remaining part of $\mu$ from the mass that finishes at $\lb$ after either the first or second step, which has total probability $\gamma - \lambda + \ps - 1 + \lambda = \xi + \ps - F(\ub-)$. Set
  \begin{equation}\label{eq:19}
    \sigma = \frac{\mu^{\xi} + \mu_{F(\ub-)}^{\ps}}{\xi + \ps - F(\ub-)},
  \end{equation}
  and $\sigma$ has mean $\lb$:
  \begin{align*}
    (\xi + \ps - F(\ub-)) \int x \, \sigma(\dx) & = \tm^{\xi} +
    \tm_{F(\ub-)}^{\ps}\\
    & = \lb(\xi + \ps - F(\ub-))
  \end{align*}
  by \eqref{eq:11}. The final stopping time will be of the same form
  both in this case and in the case where \eqref{eq:8} holds, and when
  \eqref{eq:8} fails but \eqref{eq:12} holds. So before constructing
  the embedding, we give a description of the relevant measures in the
  second case.

  Suppose \eqref{eq:8} fails, but \eqref{eq:12} holds. Then in a
  similar manner to above, we can find $\xi \in
  (F(\lb),\rho_-(0)\wedge F(\ub-))$
  such that \eqref{eq:13} holds. Define
  \begin{equation*}
    \chi  = \mu_{\xi}^{F(\ub-)} + (1-F(\ub-)+\xi) \delta_{\ub}
  \end{equation*}
  and choose $\ps$ as before as the solution to \eqref{eq:11}.
  Then set
  \begin{equation*}
    \nu = \frac{(\ps-F(\ub-) + \xi) \delta_{\lb} + \mu_{\ps}}{1-F(\ub-)+\xi}
  \end{equation*}
  and we verify that $\nu$ has mean $\ub$:
  \begin{align*}
    (1-F(\ub-)+\xi) \int x \, \nu(\dx) & = \lb (\ps - F(\ub-)+\xi) +
  \tm_{\ps}\\
  & = \lb ( \ps - F(\ub-) + \xi) - \tm_{F(\ub-)}^{\ps} -
  \tm_{\xi}^{F(\ub-)}-\tm^{\xi}\\
  & = \lb(\ps-F(\ub-) + \xi) - \lb(\xi+\ps-F(\ub-)) + \ub(1-F(\ub-) + \xi)\\
  & = \ub(1-F(\ub-)+\xi).
  \end{align*}
  Finally, setting $\sigma$ as in \eqref{eq:19} we again have $\sigma$
  with mean $\lb$.

  In both cases, we construct an embedding as follows: let $\tau_1$ be
  a UI embedding of $\chi$ (starting from $0$). Then let $\tau_2$
  be a UI embedding of $\nu$ (starting from
  $\ub$). Finally, we let $\tau^3$ be a
  UI embedding of $\sigma$ (starting from $\lb$). We then define the
  complete embedding by:
  \begin{equation*}
    \begin{split}
      \tau:=\ &\tau_1\indic{B_{\tau_1}\in
        (\lb,\ub)}\\
      &+\tau_2\circ\tau_1\indic{B_{\tau_1}=\ub}
      \indic{B_{\tau_2\circ\tau_1}>\lb}+\\
      &+\tau_3\circ\Big(\tau_1\indic{B_{\tau_1}=\lb}
      +\tau_2\circ\tau_1\indic{B_{\tau_1}=\ub}
      \indic{B_{\tau_2\circ\tau_1}=\lb}\Big),
    \end{split}
  \end{equation*}
  and it follows from our construction that $\tau$ is a UI embedding
  of $\mu$ which moreover satisfies $\dbmps =
  \lG_{II}(\ps,\xi)$.

  Suppose finally we are in case \fbox{III}, so that \eqref{eq:7}, \eqref{eq:8}
  and \eqref{eq:12} all fail. Then there exists $\ps \in
  [\rho_-(0),F(\ub-))$ such that \eqref{eq:14} holds.

  Define the probability measure
  \begin{equation*}
    \chi = \mu_{\ps}^{F(\ub-)} + (1-F(\ub-)-\ps) \delta_{\ub},
  \end{equation*}
  which has mean $0$ by the definition of $\ps$. Define also
  \begin{equation*}
    \nu = \frac{\rho_-(0) \delta_{\lb} + \mu_{\rho_-(0)}^{\ps} + \mu_{F(\ub-)}}{1-F(\ub-)+\ps}
  \end{equation*}
  and we confirm that $\nu$ has mean $\ub$:
  \begin{align*}
    (1-F(\ub-)+\ps) \int x \, \nu(dx) & = \tm^{\rho_-(0)} +
    \tm_{\rho_-(0)}^{\ps} + \tm_{F(\ub-)}\\
    & = \tm^{\ps} + \tm_{F(\ub-)} \\
    & = -\tm_{\ps}^{F(\ub-)}\\
    & = \ub(1-F(\ub-)+\ps).
  \end{align*}
  Finally, any mass which is at $\lb$ we finally embed to the measure
  $\sigma = (\rho_-(0))^{-1}\mu^{\rho_-(0)}$. That is, we define the
  stopping times $\tau_1$ which is a UI embedding of $\chi$ starting
  at $0$. Then let $\tau_2$ be a UI embedding of $\nu$, given initial
  value $\ub$, and $\tau_3$ an embedding of $\sigma$ given initial
  value $\lb$. Finally, we define
  $$\tau:=\tau_1\indic{B_{\tau_1}\neq
    \ub}+\tau_2\circ\tau_1\indic{B_{\tau_1}=\ub}
  \indic{B_{\tau_2\circ\tau_1} > \lb}+ \tau_3 \circ\tau_2\circ\tau_1\indic{B_{\tau_1}=\ub}
  \indic{B_{\tau_2\circ\tau_1} = \lb},$$ to get a UI embedding of
  $\mu$ in $B$. Furthermore, it follows from the construction that
  $\dbmps = \lG_{III}(\ps,\rho_-(0))$.
\end{proof}

\section{On joint distribution of the maximum and minimum of a continuous UI martingale}
\label{ap:mart}

We turn now to studying the properties of joint distribution of the maximum and minimum of a continuous UI martingale. As previously, $(M_t:0\leq t\leq \infty)$ is a uniformly integrable continuous
martingale. We let $\mu$ be its terminal distribution, $\mu\sim M_\infty$, and recall that $-\infty\leq
\lmu<\rmu\leq \infty$ are the bounds of the support of $\mu$, i.e.\
$[\lmu,\rmu]$ is the smallest interval with
$\mu([\lmu,\rmu])=1$. Using Theorems~\ref{thm:upper_price_mixed} and
\ref{thm:lower_price_mixed} above, as well as existing results, we study the functions
\begin{align}
  \label{eq:20}
  p(\lb,\ub) & = \Pr\left(\iM_\infty > \lb\textrm{ and }\sM_\infty <\ub
  \right)\\
  q(\lb,\ub) & = \Pr\left(\iM_\infty > \lb\textrm{ and }\sM_\infty \ge \ub
  \right)\label{eq:21}\\
  r(\lb,\ub) &  = \Pr\left(\iM_\infty\le \lb\textrm{ and }\sM_\infty \ge \ub
  \right)\label{eq:22}
\end{align}
for $\lb\leq 0 \leq \ub$. 
Note that with no restrictions on $M_0$, when looking at extrema of the functions above, it is enough to consider $M_0$ a constant (e.g.\ when maximising $r$) or $M_0\equiv M_\infty$ (e.g.\ when minimising $r$). The latter is degenerate and henceforth we assume $M_0$ is a constant a.s. Further, as our results are translation invariant, we may and will take $M_0=0$ a.s. It follows that $\mu$ is centred.

It follows from Dambis, Dubins-Schwarz Theorem that $M$ is a (continuous) time change of Brownian motion, i.e.\ we can write $M_t=B_{\tau_t}$, $t\leq \infty$, for some Brownian motion and an increasing family of stopping times $(\tau_t)$ with $B_{\tau_\infty}\sim M_\infty$, $(B_{t\land \tau_\infty}:t\geq 0 )$ UI and $\sM_\infty=\overline{B}_{\tau_\infty}$, $\iM_\infty=\underline{B}_{\tau_\infty}$. In consequence, the problem reduces to studying the maximum and minimum of Brownian motion stopped at $\tau=\tau_\infty$, which is a solution the Skorokhod embedding problem. We can deduce results about the optimal properties of the martingales from corresponding results about Skorokhod embeddings. Our first result concerns the embeddings of Perkins and the `tilted-Jacka' construction, which we now recall using the notation established previously. These constructions have been considered in \cite{Cox:2011ab}, and we will need some results from this paper; however both constructions have a long history --- see for example \cite{Perkins:86,Cox:2004aa,Jacka:88,Cox:2005aa}.  For the Perkins embedding we define\footnote{Strictly, we only consider the case where $\mu(\{0\}) = 0$. If this is not the case, then the optimal embedding requires independent randomisation to stop some mass at zero initially.}
\begin{equation}
  \label{eq:24}
  \begin{split}
    \gp(p)=q \text{ where $q$ solves }  &\tm^{q} + \tm_{p} = (1-p+q)
    F(p), \quad p > F(0)\\
    \gp(q)=p \text{ where $p$ solves }  &\tm^{q} + \tm_{p} = (1-p+q)
    F(q), \quad q < F(0-).
  \end{split}
\end{equation}
The stopping time $\tau_P$ is then defined via:
\begin{equation}
  \label{eq:25}
  \tau_P = \inf\{ t \ge 0: F(B_t) \not\in (\gp(F(\uB_t)),\gm(F(\lB_t)))\}.
\end{equation}

In a similar spirit, the tilted-Jacka construction is given as follows. Choose $\ps\in [0,1]$ such that $(\lb-m^{\ps})(m_{\ps}-\ub)\ge 0$ --- this is always possible, since we can always find $\ps$ such that $m^{\ps} = \lb$ say.  Then set $\chi = \ps \delta_{m^{\ps}} + (1-\ps) \delta_{m_{\ps}}$. The construction is as follows: we first embed the distribution $\chi$, then, given we hit $m^{\ps}$, we embed $\mu^{\ps}$ using the reversed Az\'ema-Yor construction (c.f.~\cite{Obloj:04b}); if we hit $m_{\ps}$ then we embed $\mu_{\ps}$ using the Az\'ema-Yor construction.

Finally, we observe that both cases give rise to martingales with
certain optimality properties using the fact that the stopped Brownian
motion is a continuous martingale.

\begin{prop}\label{prop:M_bounds}
  We have the following properties:
  \begin{enumerate}
  \item $p(0,\ub)=0 = p(\lb,0)$, $q(0,\ub) = 0 = q(\lb,\rmu)$ and
    $r(\lmu,\ub) = 0 = r(\lb,\rmu)$;
  \item $p(\lb,\ub)=1$ on $[-\infty,\lmu)\times(\rmu,\infty]$,
    $q(\lb,\ub) = 1$ on $[-\infty,\lmu) \times \{0\}$, and $r(0,0) = 1$;
  \item $p$ and $q$ are non-increasing in $\lb\in (\lmu,0)$ and
    $p$ is non-decreasing in $\ub\in (0,\rmu)$; $r$ is non-decreasing
    in $\lb \in (\lmu,0)$ and $q$ and $r$ are non-decreasing in $\ub
    \in (0,\rmu)$;
  \item for $\lmu\leq \lb<0<\ub\leq\rmu$ we have
    \begin{equation}\label{eq:uimart_bound}
      \Pr\big(\underline{B}_{\tau_J}>\lb\textrm{ and
      }\overline{B}_{\tau_J} <\ub \big)\leq p(\lb,\ub)\leq
      \Pr\big(\underline{B}_{\tau_P}>\lb\textrm{ and
      }\overline{B}_{\tau_P} <\ub \big),
    \end{equation}
    where $(B_t)$ is a standard Brownian motion with $B_0=0$,
    $\tau_P$ is the Perkins stopping time \cite[(4.4)]{Cox:2011ab} embedding
    $\mu$ and $\tau_J$ is the `tilted-Jacka' stopping time \cite[(4.6)]{Cox:2011ab},
    for barriers $(\lb,\ub)$, embedding $\mu$;
  \item for $\lmu\leq \lb<0<\ub\leq\rmu$, the lower bound on
    $q(\lb,\ub)$ is given by \eqref{eq:generalbound_dbmp}, and the
    upper bound is given by \eqref{eq:generalbound_dbmp2}. Moreover
    these bounds are attained by the constructions in
    Theorems~\ref{thm:upper_price_mixed} and
    \ref{thm:lower_price_mixed} respectively;
  \item for $\lmu\leq \lb<0<\ub\leq\rmu$, the lower bound on
    $r(\lb,\ub)$ is given by Proposition~2.3 of \cite{Cox:2011aa}, and
    the upper bound is given by Proposition~2.1 of \cite{Cox:2011aa}.
    Moreover these bounds are attained by the constructions in
    Theorems~2.4 and 2.2 of \cite{Cox:2011aa} respectively.
  \end{enumerate}
\end{prop}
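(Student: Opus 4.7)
The plan is to dispatch the six items in two groups: items (i)--(iii) are path-level properties of any continuous UI martingale and I will prove them directly, while (iv)--(vi) are essentially applications of optimality results, partly proved earlier in the paper and partly cited, which I will simply invoke.

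For items (i) and (ii), I would base the arguments on two facts: the a.s.\ inclusions
\[
\lmu \le \iM_\infty \le 0 \le \sM_\infty \le \rmu,
\]
and post-hitting rigidity at the extremes. The middle pair is immediate from continuity of $M$ and $M_0 = 0$. For the outer pair, I would apply optional stopping at first passage to $\lmu-\eps$ or $\rmu+\eps$, which would place $\E[M_\infty \mid \F_{H_\bullet}]$ outside $[\lmu,\rmu]$, contradicting the support of $\mu$. The rigidity is: on $\{\sM_\infty \ge \rmu\}$ one actually has $\sM_\infty = \rmu$, and after $H_\rmu$ the process $\rmu - M$ is a nonnegative supermartingale with vanishing terminal expectation, so $M$ is constant at $\rmu$ thereafter (and symmetrically at $\lmu$). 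Once these are in hand, each zero-probability and unit-probability claim in (i)--(ii) follows by event inspection, taking account of possible atoms of $\mu$ at the extremes. Item (iii) is then immediate: the events underlying $p$, $q$, $r$ are monotone in $\lb$ and $\ub$ by set inclusion.

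For (iv)--(vi) I would catalogue the relevant theorems. Item (iv) follows from the known optimality of the Perkins embedding (maximising $\Pr(\iM_\infty > \lb,\, \sM_\infty < \ub)$) and the tilted-Jacka embedding (minimising it) over all UI embeddings of $\mu$; this is the content of the corresponding results in \cite{Cox:2011ab}, with earlier roots in \cite{Perkins:86,Jacka:88,Cox:2004aa,Cox:2005aa}. Via the Dambis--Dubins--Schwarz representation $M_t = B_{\tau_t}$ recalled at the start of Section~\ref{ap:mart}, these translate directly into \eqref{eq:uimart_bound}. Item (v) is precisely the combination of Propositions~\ref{prop:prob_upperbound} and~\ref{prop:dtnt_lowerbound} (the bounds) with Theorems~\ref{thm:upper_price_mixed} and~\ref{thm:lower_price_mixed} (sharpness), all established earlier in this paper. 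Item (vi) is the analogous statement for $r$, settled by Propositions~2.1, 2.3 and Theorems~2.2, 2.4 of \cite{Cox:2011aa}.

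The main obstacle, such as it is, is the careful treatment of the post-hitting rigidity argument at $\lmu$ and $\rmu$ in items (i)--(ii) --- in particular, handling atoms of $\mu$ at the extremes in the statements $q(\lb,\rmu) = 0$ and $r(\lmu,\ub) = 0 = r(\lb,\rmu)$; the more substantial analytic work has already been done in the theorems being invoked.
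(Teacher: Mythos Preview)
Your overall plan coincides with the paper's own treatment: the paper disposes of (i)--(iii) by declaring them ``clear'' and handles (iv)--(vi) by citing Lemmas~4.2--4.3 of \cite{Cox:2011ab}, the present Theorems~\ref{thm:upper_price_mixed}--\ref{thm:lower_price_mixed}, and \cite{Cox:2011aa}, exactly as you outline. Your explicit support bounds $\lmu\le\iM_\infty\le 0\le\sM_\infty\le\rmu$ and the set-inclusion monotonicity for (iii) are fine and in fact more detailed than what the paper offers.

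There is, however, a genuine gap where you single out ``handling atoms of $\mu$ at the extremes'' as the main obstacle and propose to resolve it via post-hitting rigidity. Rigidity at $\rmu$ only tells you that on $\{\sM_\infty\ge\rmu\}$ the process is absorbed at $\rmu$ after $H_{\rmu}$ and hence $M_\infty=\rmu$; it says nothing about $\iM_\infty$ \emph{before} $H_{\rmu}$. Concretely, take $\mu=\tfrac12\delta_{-1}+\tfrac12\delta_{1}$ and $M$ equal to Brownian motion stopped on first exit from $(-1,1)$. Then $\rmu=1$, and for $\lb\in(-1,0)$ one computes
\[
q(\lb,\rmu)=\Pr\big(\iM_\infty>\lb,\ \sM_\infty\ge 1\big)=\Pr(H_1<H_{\lb})=\frac{-\lb}{1-\lb}>0,
\]
so $q(\lb,\rmu)=0$ fails; the same example gives $r(\lmu,\ub)>0$ and $r(\lb,\rmu)>0$ for interior barriers. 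Thus these particular boundary identities in (i) hold as written only when $\mu$ has no atom at the relevant endpoint. The paper sidesteps this by calling the first three items clear, so the imprecision lies in the stated proposition rather than in your strategy; but your claim that the rigidity argument ``handles'' the atomic case should be withdrawn.
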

The first three assertions of the proposition are clear. Assertion
$(iv)$ is a reformulation of Lemmas~4.2 and 4.3 of \cite{Cox:2011ab}
--- it suffices to note that $(B_{t\wedge\tau_J})$,
$(B_{t\wedge\tau_P})$, $(M_t)$ are all UI martingales starting at $0$
and with the same terminal law $\mu$ for $t=\infty$. Likewise, part $(vi)$
is a reinterpretation of the results of \cite{Cox:2011aa}. We note that therein the results were formulated for the case of non-atomic $\mu$. They extend readily, with methods used in Section \ref{sec:proofs} above, specifically by characterising the stopping distributions via quantiles of the underlying measures, to the general case.

We can think of any of the functions $p(\cdot,\cdot), q(\cdot,\cdot)$, and $r(\cdot,\cdot)$ as a surface defined over the quarter-plane $[-\infty,0]\times [0,\infty]$. Proposition \ref{prop:M_bounds} describes boundary values of the surface, monotonicity properties and gives an upper and a lower bound on the surface. However we note that --- most obviously in $(iv)$ --- there is a substantial difference between the bounds linked to the fact that $\tau_P$ does not depend on $(\lb,\ub)$ while $\tau_J$ does. In consequence, the upper bound is attainable: there is a martingale $(M_t)$, namely $M_t=(B_{t\wedge\tau_P})$, for which $p$ is equal to the upper bound for all $(\lb,\ub)$. In contrast a martingale $(M_t)$ for which $p$ would be equal to the lower bound does not exist. For the martingale $M_t=(B_{t\wedge\tau_J})$, where $\tau_J$ is defined for some pair $(\lb,\ub)$, $p$ will attain the lower bound in some neighbourhood of $(\lb,\ub)$ which will be strictly contained in $(\lmu,0)\times (0,\rmu)$. More generally, the latter case is more typical of all the constructions which are used in the result; however, with some careful construction, it seems likely that one can usually find a construction which will be optimal for all values of $(\lb, \ub)$ which lie in some small open set (for example, this is true of the tilted-Jacka construction), but there will be limits on how large the region on which a given construction is optimal can be made.

We now give a result which provides some further insight into the structure of the bounds discussed above. In particular, we can show some finer properties of the functions $p,q,r$ and their upper and lower bounds. We state and prove the result for the function $p$, but the corresponding versions for $q$ and $r$ will follow in a clear manner.

\begin{theorem} \label{thm:boundstructure} The function $p(\lb,\ub)$ is c\`agl\`ad in $\ub$ and c\`adl\`ag in $\lb$. Moreover, if $p$ is discontinuous at $(\lb,\ub)$, then $\mu$ must have an atom at one of $\lb$ or $\ub$. Further:
  \begin{enumerate}
  \item if there is a discontinuity at $(\lb,\ub)$ of the form:
    \[
    \limsup_{w \to \ub} p(\lb,w) > p(\lb,\ub)
    \]
    then the function $g$ defined by
    \[
    g(u) = \limsup_{w \to \ub} p(u,w) - p(u,\ub), \qquad u \le
    \lb
    \]
    is non-increasing.
  \item if there is a discontinuity at $(\lb,\ub)$ of the form:
    \[
    \limsup_{u \to \lb} p(u,\ub) > p(\lb,\ub)
    \]
    then the function $h$ defined by
    \[
    h(w) = \limsup_{u \to \lb} p(u,w) - p(\lb,w), \qquad w \ge
    \ub
    \]
    is non-decreasing.
  \end{enumerate}
  And, at any discontinuity, we will be in at least one of the above
  cases.

  In addition the lower bound (corresponding to the
  tilted-Jacka construction) is continuous in $(\lmu,0)\times
  (0,\rmu)$, and continuous at the boundary ($\ub = \rmu$ and $\lb
  = \lmu$) unless there is an atom of $\mu$ at either $\rmu$ or
  $\lmu$, while the upper bound (which corresponds to the Perkins
  construction) has a discontinuity corresponding to every atom of
  $\mu$.
\end{theorem}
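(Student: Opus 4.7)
The proof splits into four parts: the one-sided continuity of $p$, the atom characterisation of partial jumps, the decomposition of joint discontinuities, and the continuity analysis of the tilted-Jacka and Perkins extremal constructions.

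First, the càglàd property in $\ub$ and the càdlàg property in $\lb$ are immediate from monotone continuity of probability: for $\ub_n \uparrow \ub$, $\{\sM_\infty < \ub_n\} \uparrow \{\sM_\infty < \ub\}$, and for $\lb_n \downarrow \lb$, $\{\iM_\infty > \lb_n\} \uparrow \{\iM_\infty > \lb\}$. The remaining possible partial jumps are $g(\lb) = \Pr(\iM_\infty > \lb, \sM_\infty = \ub)$ (from the right in $\ub$) and $h(\ub) = \Pr(\iM_\infty = \lb, \sM_\infty < \ub)$ (from the left in $\lb$), and their monotonicities follow from set inclusion: $g(u)$ decreases in $u$ since $\{\iM_\infty > u\}$ shrinks, and $h(w)$ increases in $w$ since $\{\sM_\infty < w\}$ grows.

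The central lemma is that $\Pr(\sM_\infty = \ub) > 0$ forces $\mu(\{\ub\}) > 0$. To prove it, condition on $\{H_\ub < \infty\}$ and consider the UI martingale $N_t = M_{H_\ub + t}$ with $N_0 = \ub$. For each $\eps > 0$, optional stopping at $\sigma_\eps = \inf\{t : N_t \ge \ub + \eps\}$ yields
\[
\ub = (\ub + \eps)\Pr(\sigma_\eps < \infty) + \E\bigl[N_\infty \indic{\sigma_\eps = \infty}\bigr].
\]
Sending $\eps \downarrow 0$ with $\{\sigma_\eps < \infty\} \uparrow \{\sup N > \ub\}$ and applying uniform integrability forces $\E[N_\infty \indic{\sup N \le \ub}] = \ub\, \Pr(\sup N \le \ub)$, whence $N_\infty = \ub$ a.s.\ on $\{\sup N \le \ub\}$ since $N_\infty \le \ub$ there. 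In other words $M_\infty = \ub$ on $\{\sM_\infty = \ub\}$, and the symmetric statement holds for the minimum. Consequently any partial jump of $p$ forces a matching atom of $\mu$.

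The largest joint jump at $(\lb,\ub)$ is obtained by approaching from $l \uparrow \lb$ and $w \downarrow \ub$, in which case $p(l,w) \downarrow \Pr(\iM_\infty \ge \lb, \sM_\infty \le \ub)$. The obvious four-way decomposition gives
\[
\Pr(\iM_\infty \ge \lb, \sM_\infty \le \ub) - p(\lb, \ub) = g(\lb) + h(\ub) + \Pr(\iM_\infty = \lb, \sM_\infty = \ub).
\]
Applying the lemma on both sides would force $M_\infty = \ub$ and $M_\infty = \lb$ simultaneously on the cross event, so it has probability zero. The joint jump therefore equals $g(\lb) + h(\ub)$, and any joint discontinuity forces $g(\lb) > 0$ or $h(\ub) > 0$, placing us in case (i) or (ii) and producing an atom of $\mu$ by the central lemma.

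Finally, for the extremal constructions, the tilted-Jacka parameters $\ps$, $m^\ps$ and $m_\ps$ are defined implicitly by $(\lb, \ub)$ and vary continuously on $(\lmu, 0) \times (0, \rmu)$ whenever $\mu$ has no atom at the relevant endpoint; $p_J(\lb, \ub)$ is then an explicit continuous function of these data via the (reversed) Azéma-Yor embeddings used in the second stage, so continuity extends to the boundary except at $\rmu$ or $\lmu$ if those are atoms. For the Perkins construction $\tau_P$ is independent of $(\lb, \ub)$, so $p_P$ is determined by the joint law of $(\lB_{\tau_P}, \uB_{\tau_P})$. The stopping rule \eqref{eq:25} forces $F(B_{\tau_P})$ to lie on one of the two monotone curves $\gp(F(\uB_{\tau_P}))$ or $\gm(F(\lB_{\tau_P}))$, so each atom of $\mu$ produces a flat piece in one of these curves and hence an atom in the marginal distribution of $\uB_{\tau_P}$ or $\lB_{\tau_P}$ at a corresponding level, yielding a jump of $p_P$ there. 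The hard part is this last step: carefully tracking each atom of $\mu$ through \eqref{eq:24} to pinpoint the level at which it produces an atom of the extrema, together with verifying the continuity of the tilted-Jacka quantities through the same barycentric machinery.
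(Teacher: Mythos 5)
Your proof follows essentially the same route as the paper's: one-sided continuity by monotone convergence, the identification $g(u)=\Pr(\iM_\infty>u,\,\sM_\infty=\ub)$ and $h(w)=\Pr(\iM_\infty=\lb,\,\sM_\infty<w)$ with monotonicity by set inclusion, the impossibility of $\Pr(\iM_\infty=\lb,\,\sM_\infty=\ub)>0$ via the martingale lemma (Proposition~\ref{prop:supatom}), and the atom analysis of the extremal constructions. The main difference is that you supply an explicit optional-stopping proof of Proposition~\ref{prop:supatom}, which the paper merely cites as a ``simple consequence of the martingale property,'' while your treatment of the tilted-Jacka and Perkins endgame is slightly more compressed than the paper's (which pins down the range $(\ub_*,\ub^*)$ over which the flat piece of $\gp$ produces the atom of $\lB_{\tau_P}$ exactly at $\lb$).
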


\begin{remark}
  \begin{enumerate}
  \item   Considering $q$ instead of $p$, the function will be c\`adl\`ag in
  both arguments, and the directions of the convergence results needs
  to be adapted suitably. We also observe that discontinuities in the
  upper bound occur only if there is an atom of $\mu$ at $\lb$, {\it
    and} we are in case \fbox{I} of
  Theorem~\ref{thm:upper_price_mixed}. Similarly, there is a
  discontinuity in the lower bound at $\ub$ if there is an atom of
  $\mu$ at $\ub$, and we are in either of cases \fbox{II} or
  \fbox{III} of Theorem~\ref{thm:lower_price_mixed}.

\item Considering $r$ instead of $p$, the function will be c\`agl\`ad
  in $\lb$ and c\`adl\`ag in $\ub$. We also observe that
  discontinuities in the upper bound never occur, while there are
  discontinuities in the lower bound at $\ub$ and/or $\lb$ if there is an atom of
  $\mu$ at either of these values.
  \end{enumerate}
\end{remark}

Before we prove the above result, we note the following useful result,
which is a simple consequence of the martingale property:

\begin{prop} \label{prop:supatom} Suppose that $(M_t)_{t \ge 0}$ is a
  UI martingale with $M_\infty \sim \mu$. Then $\Pr(\sM_{\infty} =
  \ub) >0$ implies $\mu(\{\ub\}) \ge \Pr(\sM_{\infty} = \ub)$ and
  \[
  \{ \sM_{\infty} = \ub \} = \{ M_t = \ub, \ \forall t \ge H_{\ub}\}
  \subseteq \{ M_\infty = \ub\} \quad \as{}.
  \]
\end{prop}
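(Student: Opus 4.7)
The strategy is to apply the optional stopping theorem at $\tau := H_{\ub}$ to reduce to a shifted post-hitting martingale, and then to exploit the rigidity of non-negative UI martingales starting from zero. On the event $\{\tau < \infty\}$ I would define the shifted process $N_t := M_{\tau+t}$, $t\ge 0$. By optional stopping together with the uniform integrability of $M$, $N$ is a continuous UI martingale with respect to the shifted filtration, and $N_0 = M_\tau = \ub$ by continuity of $M$.

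\textbf{Main step.} Next I would set $T := \inf\{t\ge 0 : N_t > \ub\}$ and consider the stopped process $N^T_t := N_{t\wedge T}$. By the definition of $T$, $N^T_t \le \ub$ for $t < T$; by continuity of $N$, $N_T = \ub$ on $\{T<\infty\}$ (since $N_{T-}\le \ub$ and $N_{T+}\ge \ub$), so $N^T_t \le \ub$ in all cases. A further application of optional stopping shows $N^T$ is a UI martingale with $\E[N^T_t] = \ub$ for every $t$, so $\ub - N^T$ is a non-negative UI martingale starting from $0$. The combination $\ub - N^T_t \ge 0$ and $\E[\ub - N^T_t] = 0$ forces $N^T_t = \ub$ a.s.\ at each fixed $t$, and path continuity together with the countability of $\Q_{\ge 0}$ upgrades this to $N^T \equiv \ub$ a.s.

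\textbf{Interpretation and conclusion.} The identity $N^T \equiv \ub$ says that on $\{T = \infty\} \cap \{\tau < \infty\}$ one has $M_t = \ub$ for all $t \ge H_{\ub}$. On $\{\tau < \infty\}$ the event $\{T = \infty\}$ coincides with $\{\sup_{t\ge 0} N_t \le \ub\} = \{\sM_\infty = \ub\}$, since $\tau < \infty$ already forces $\sM_\infty \ge \ub$. For the residual event $\{\tau = \infty,\, \sM_\infty = \ub\}$, continuity of $M$ and the a.s.\ convergence $M_t \to M_\infty$ force $M_\infty = \ub$: any sequence $(t_n)$ with $M_{t_n}\to \ub$ must satisfy $t_n\to\infty$ (else continuity would yield $H_{\ub} < \infty$), and hence $M_\infty = \lim_n M_{t_n} = \ub$. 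Combining the two cases (with $t$ ranging over $[0,\infty]$) gives the set equality $\{\sM_\infty = \ub\} = \{M_t = \ub \text{ for all } t \ge H_{\ub}\} \subseteq \{M_\infty = \ub\}$, and taking probabilities, $\Pr(\sM_\infty = \ub) \le \Pr(M_\infty = \ub) = \mu(\{\ub\})$. I do not anticipate a substantive obstacle; the only delicate points are the upgrade from ``a.s.\ at each fixed $t$'' to ``a.s.\ simultaneously in $t$'', handled by path continuity, and the bookkeeping for the edge case $\{H_{\ub}=\infty\}$.
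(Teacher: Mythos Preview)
The paper does not supply an explicit proof of this proposition; it merely records it as ``a simple consequence of the martingale property.'' Your argument is a correct and complete elaboration of that remark. The key device---stopping the post-$H_{\ub}$ martingale $N$ at $T=\inf\{t:N_t>\ub\}$ so that $\ub-N^T$ becomes a nonnegative martingale started at zero, hence identically zero---is precisely the elementary reasoning the authors have in mind, and your treatment of the edge case $\{H_{\ub}=\infty\}$ via the a.s.\ convergence $M_t\to M_\infty$ is clean. The only step you leave implicit is the reverse inclusion $\{M_t=\ub\text{ for all }t\ge H_{\ub}\}\subseteq\{\sM_\infty=\ub\}$, but this is immediate from continuity and the paper's standing convention $M_0=0<\ub$ (so $M_t<\ub$ for $t<H_{\ub}$).
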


\begin{proof}[Proof of Theorem~\ref{thm:boundstructure}]
  We begin by noting that by definition of $p(\lb,\ub)$, we
  necessarily have the claimed continuity and limiting
  properties. Further,
  \[
  \liminf_{(s,v) \to (u,w)} p(s,v) \ge \Pr(\iM_\infty > \lb
  \mbox{ and } \sM_\infty < \ub)
  \]
  and
  \[
  \limsup_{(s,v) \to (u,w)} p(s,v) \le \Pr(\iM_\infty \ge \lb
  \mbox{ and } \sM_\infty \le \ub).
  \]
  It follows that the function $p$ is continuous at $(\lb,\ub)$ if
  $\Pr(\iM_\infty = \lb) = \Pr(\sM_\infty = \ub) = 0$. By
  Proposition~\ref{prop:supatom}, this is true when $\mu(\{\ub,\lb\})
  = 0$.

  Note that we can now see that at a discontinuity of $p$, we must be
  in at least one of the cases (i) or (ii). This is because
  discontinuity at $(\lb,\ub)$ is equivalent to
  \[
  \Pr(\iM_\infty \ge \lb \mbox{ and } \sM_\infty \le \ub) >
  \Pr(\iM_\infty > \lb \mbox{ and } \sM_\infty < \ub),
  \]
  from which we can deduce that at least one of the events
  \[
  \{\iM_\infty > \lb \mbox{ and } \sM_\infty = \ub\}, \quad
  \{\iM_\infty = \lb \mbox{ and } \sM_\infty < \ub\}, \quad
  \{\iM_\infty = \lb \mbox{ and } \sM_\infty = \ub\}
  \]
  is assigned positive mass. However, by
  Proposition~\ref{prop:supatom} the final event implies both
  $M_\infty = \lb$ and $M_\infty = \ub$ which is
  impossible. Consequently, at least one of the first two events must
  be assigned positive mass, and these are precisely the cases (i) and
  (ii).

  Consider now case (i). We can rewrite the statement as: if $g(\lb) >
  0$, then $g(u)$ is decreasing for $u<\lb$. Note however that
  \begin{eqnarray*}
    g(u) & = & \Pr(\iM_\infty > u \mbox{ and } \sM_\infty \le \ub) -
    \Pr(\iM_\infty > u \mbox{ and } \sM_\infty < \ub) \\
    & = & \Pr(\iM_\infty > u \mbox{ and } \sM_\infty = \ub)
  \end{eqnarray*}
  which is clearly non-increasing in $u$. {In fact, provided that
    $g(\lb)<\Pr(\sM_\infty=\ub)$, it follows from \eg{}
    \cite[Theorem~4.1]{Rogers:93} that $g$ is strictly decreasing for
    $\lb>u>\sup\{u\geq -\infty: g(u)=\Pr(\sM_\infty=\ub)\}$.  A
    similar proof holds in case (ii).}

  We now consider the lower bounds corresponding to the tilted-Jacka
  construction. We wish to show that
  \[
  \Pr(\iM_\infty \ge \lb \mbox{ and } \sM_\infty \le \ub) =
  \Pr(\iM_\infty > \lb \mbox{ and } \sM_\infty < \ub),
  \]
  for any $(\ub,\lb)$ except those excluded in the statement of the
  theorem. We note that it is sufficient to show that $\Pr(\iM_\infty
  = \lb) = \Pr(\sM_\infty = \ub) = 0$, and by
  Proposition~\ref{prop:supatom} it is only possible to have an atom
  in the law of the maximum or the minimum if the process stops at the
  maximum with positive probability; we note however that the stopping
  time $\tau_J$, due to properties of the Az\'ema-Yor embedding
  precludes such behaviour except at the points $\lmu, \rmu$.

  Considering now the Perkins construction, we note from
  \eqref{eq:25} and the fact that the function $\gp$ is
  decreasing, that we will stop at $\lb$ only if $\gp(F(\sM_t))
  = \lb$ and $M_t = \iM_t = \lb$. It follows from \eqref{eq:24}
  that there is a range of values $(\ub_*,\ub^*)$ for which
  $\gp(F(b)) = \lb$, and consequently, we must have $h(b) =
  \Pr(\iM_\infty = \lb, \sM_\infty < b)$ increasing in $b$ as $b$ goes
  from $\ub_*$ to $\ub^*$, with $h(\ub_*) = \Pr(\iM_\infty = \lb,
  \sM_\infty < \ub_*)=0$ and $h(\ub^*) = \Pr(\iM_\infty = \lb,
  \sM_\infty < \ub^*)=\mu(\{\lb\})$.\footnote{In fact, as above, it
    follows from \eg{} \cite[Theorem~2.2]{Rogers:93} that the maximum
    must have a strictly positive density with respect to Lebesgue
    measure, and therefore that the function $h$ is strictly
    increasing between the points $\ub_*$ and $\ub^*$.} Similar
  results for the function $g$ also follow.
\end{proof}

\section*{Conclusions}
In this paper, we studied the possible joint distributions of
$(\sM_\infty,\iM_\infty)$ given the law of $M_\infty$, and were able to obtain
number of qualitative properties and sharp quantitative bounds. It follows from
our results that the interaction between the maximum and minimum is highly
non-trivial which makes the pair above much harder to study than
$\overline{M}_\infty$ and $\iM_\infty$ on their own. This is best seen in the
case of Brownian motion where $\overline{B}_t$ has an easily accessible
distribution while the description of the joint distribution of
$(\underline{B}_t,\overline{B}_t)$ is much more involved. A further natural
question arising from our work is to characterise the joint distributions of the
joint distributions of the triple $(M_\infty,\sM_\infty,\iM_\infty)$.  At
present it is not clear to us if, and to what extent, a complete
characterisation of the possible joint distributions of this triple, in the
spirit of Rogers \cite{Rogers:93} and Vallois \cite{Vallois:93}, is feasible.
It remains an open and challenging problem.

\bibliographystyle{alpha} \bibliography{doubleexit}

\end{document}